\theoremstyle{plain}
\newtheorem{thm}{Theorem}[section]
\newtheorem*{thm*}{Theorem}
\newtheorem*{cor*}{Corollary}
\newtheorem{prop}[thm]{Proposition}
\newtheorem{proposition}[thm]{Proposition}
\newtheorem{lemma}[thm]{Lemma}
\newtheorem{cor}[thm]{Corollary}
\newtheorem{corollary}[thm]{Corollary}
\newtheorem{claim}{Claim}
\newtheorem*{claim*}{Claim}
\theoremstyle{definition}
\newtheorem{definition}[thm]{Definition}
\newtheorem{ex}[thm]{Example}
\newtheorem{remark}[thm]{Remark}
\theoremstyle{remark}
\numberwithin{equation}{thm}
\def\GCD{\operatorname{GCD}}
\def\Ext{\operatorname{Ext}}
\def\bbM{\mathbb{M}}
\def\bbI{\mathbb{I}}
\def\rank{\mathrm{rank}}
\def\m{\mathfrak m}
\def\n{\mathfrak n}
\newcommand{\rma}{\mathrm{a}}
\newcommand{\rme}{\mathrm{e}}
\newcommand{\rmr}{\mathrm{r}}
\newcommand{\rmv}{\mathrm{v}}
\newcommand{\rmJ}{\mathrm{J}}
\newcommand{\rmK}{\mathrm{K}}
\newcommand{\rmQ}{\mathrm{Q}}
\newcommand{\calX}{\mathcal{X}}
\newcommand{\fka}{\mathfrak{a}}
\newcommand{\fkb}{\mathfrak{b}}
\newcommand{\fkc}{\mathfrak{c}}
\newcommand{\fkm}{\mathfrak{m}}
\newcommand{\fkn}{\mathfrak{n}}
\newcommand{\mapright}[1]{%
\smash{\mathop{%
\hbox to 1cm{\rightarrowfill}}\limits^{#1}}}
\newcommand{\mapleft}[1]{%
\smash{\mathop{%
\hbox to 1cm{\leftarrowfill}}\limits_{#1}}}
\def\Syz{\mathrm{Syz}}
\def\GGL{\operatorname{GGL}}
\def\gr{\mbox{\rm gr}}
\title[Chains of Ulrich ideals in Cohen-Macaulay local rings of dimension one]{The structure of chains of Ulrich ideals in Cohen-Macaulay local rings of dimension one}
\author{Shiro Goto}
\address{Department of Mathematics, School of Science and Technology, Meiji University, 1-1-1 Higashi-mita, Tama-ku, Kawasaki 214-8571, Japan}
\email{shirogoto@gmail.com}
\author{Ryotaro Isobe}
\address{Department of Mathematics and Informatics, Graduate School of Science and Technology, Chiba University, Chiba-shi 263, Japan}
\email{r.isobe.math@gmail.com}
\author{Shinya Kumashiro}
\address{Department of Mathematics and Informatics, Graduate School of Science and Technology, Chiba University, Chiba-shi 263, Japan}
\email{polar1412@gmail.com}
\thanks{2010 {\em Mathematics Subject Classification.} 13H10, 13H15}
\thanks{{\em Key words and phrases.} Cohen-Macaulay ring, Gorenstein ring, generalized Gorenstein ring, canonical ideal, Ulrich ideal, minimal multiplicity}
\thanks{The first author was partially supported by the JSPS Grant-in-Aid for Scientific Research (C) 16K05112. The second and third authors were partially supported by Birateral Programs (Joint Research) of JSPS and International Research Supporting Programs of Meiji University.}
\begin{document}
\maketitle

\setlength{\baselineskip}{20pt}

\begin{abstract}
This paper studies Ulrich ideals in one-dimensional Cohen-Macaulay local rings. A correspondence between Ulrich ideals and overrings is given. Using the correspondence, chains of Ulrich ideals are closely explored. The specific cases where the rings are of minimal multiplicity and $\GGL$ rings are analyzed.   
\end{abstract}



\section{Introduction}
The purpose of this paper is to investigate the behavior of chains of Ulrich ideals, in a one-dimensional Cohen-Macaulay local ring, in connection with the structure of birational finite extensions of the base ring.

The notion of Ulrich ideals is a generalization of stable maximal ideals, which dates back to 1971, when the monumental paper \cite{L} of J. Lipman was published. The modern treatment of Ulrich ideals was started by \cite{GOTWY1, GOTWY2} in 2014, and has been explored in connection with the representation theory of rings. In \cite{GOTWY1}, the basic properties of Ulrich ideals are summarized, whereas in \cite{GOTWY2},  Ulrich ideals in two-dimensional Gorenstein rational singularities are closely studied with a concrete classification. However, in contrast to the existing research on Ulrich ideals, the theory pertaining to the one-dimensional case does not seem capable of growth. Some part of the theory, including research on the ubiquity as well as the structure of the chains of Ulrich ideals, seems to have been left unchallenged. In the current paper, we focus our attention on the one-dimensional case, clarifying the relationship between Ulrich ideals and the birational finite extensions of the base ring. The main objective is to understand the behavior of chains of Ulrich ideals in one-dimensional Cohen-Macaulay local rings.

To explain our objective as well as our main results, let us begin with the definition of Ulrich ideals. Although we shall focus our attention on the one-dimensional case, we would like to state the general definition, in the case of any arbitrary dimension. Let $(R,\m)$ be a Cohen-Macaulay local ring with $d= \dim R \ge 0$.

\begin{definition}[\cite{GOTWY1}]\label{def1.1}
Let $I$ be an $\m$-primary ideal of $R$ and assume that $I$ contains a parameter ideal $Q=(a_1,a_2, \ldots, a_d)$ of $R$ as a reduction. We say that $I$ is an $Ulrich\ ideal$ of $R$, if the following conditions are satisfied.\begin{enumerate}[{\rm (1)}]
\item $I\ne Q$,
\item $I^2=QI$, and 
\item $I/I^2$ is a free $R/I$-module.
\end{enumerate}
\end{definition}
We notice that Condition (2) together with Condition (1) are equivalent to saying that the associated graded ring $\gr_I(R) = \bigoplus_{n \ge 0}I^n/I^{n+1}$ of $I$  is a Cohen-Macaulay ring and $\rma(\gr_I(R)) = 1-d$, where $\rma(\gr_I(R))$ denotes the a-invariant of $\gr_I(R)$. Therefore, these two conditions are independent of the choice of reductions $Q$ of $I$. In addition, assuming Condition (2) is satisfied, Condition (3) is equivalent to saying that $I/Q$ is a free $R/I$-module (\cite[Lemma 2.3]{GOTWY1}). We also notice that Condition (3) is automatically satisfied if $I = \fkm$, so that the maximal ideal $\fkm$ is an Ulrich ideal of $R$ if and only if $R$ is not a regular local ring, possessing minimal multiplicity (\cite{S}).  From this perspective, Ulrich ideals are a kind of generalization of stable maximal ideals, which Lipman \cite{L} started to analyze in 1971. 

Here, let us briefly summarize some basic properties of Ulrich ideals, as seen in \cite {GOTWY1, GTT2}. Although we need only a part of them, let us also include some superfluity in order to show what specific properties Ulrich ideals enjoy. Throughout this paper, let $\rmr(R)$ denote the Cohen-Macaulay type of $R$, and let $\Syz_R^i(M)$ denote, for each integer $i \ge 0$ and for each finitely generated $R$-module $M$, the $i$-th syzygy module of $M$ in its minimal free resolution.

\begin{thm}[\cite{GOTWY1, GTT2}]
Let $I$ be an Ulrich ideal of a Cohen-Macaulay local ring $R$ of dimension $d \ge 0$ and set $t = n-d ~(>0)$, where $n$ denotes the number of elements in a minimal system of generators of $I$. Let 
$$\cdots \to F_i \overset{\partial_i}{\to} F_{i-1} \to \cdots \to F_1 \overset{\partial_1}{\to}  F_0 = R \to R/I \to 0$$
be a minimal free resolution of $R/I$. Then $\rmr(R) = t{\cdot}\rmr(R/I)$ and the following assertions hold true.
\begin{enumerate}[$(1)$]
\item  $\mathbf{I}(\partial_i) = I$ for $i \ge 1$.
\item  For $i \ge 0$, $\beta_i = 
\begin{cases}
t^{i-d}{\cdot}(t+1)^d & \ \ (i \ge d),\\
\binom{d}{i}+ t{\cdot}\beta_{i-1} & \ \ (1 \le i \le d),\\
1  & \ \  (i=0).
\end{cases}$
\item  $\Syz_R^{i+1}(R/I) \cong [\Syz_R^i(R/I)]^{\oplus t}$ for $i \ge d$. 
\item  For $i \in \Bbb Z$,  $\Ext_R^i(R/I, R)\cong
\begin{cases}
(0) & (i < d),\\
(R/I)^{\oplus t} & (i = d),\\
(R/I)^{\oplus (t^2-1){\cdot}t^{i - (d+1)} } & (i > d). 
\end{cases}$
\end{enumerate}
Here $\mathbf{I}(\partial_i)$ denotes the ideal of $R$ generated by the entries of the matrix $\partial_i$, and $\beta_i = \rank_RF_i$. 
\end{thm}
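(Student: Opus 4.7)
The key structural input is the short exact sequence
$$0\to I/Q\to R/Q\to R/I\to 0,\qquad I/Q\cong(R/I)^{\oplus t},$$
the isomorphism being a reformulation of the Ulrich condition (3). Since $a_1,\dots,a_d$ is a regular sequence on $R$, the Koszul complex is a minimal $R$-free resolution of $R/Q$; hence $\beta_i^R(R/Q)=\binom{d}{i}$, and by self-duality of the Koszul complex $\Ext^i_R(R/Q,R)=0$ for $i\neq d$ while $\Ext^d_R(R/Q,R)\cong R/Q$. The plan also invokes Rees's change-of-rings isomorphism
$$\Ext^{i+d}_R(M,R)\cong\Ext^i_{R/Q}(M,R/Q)\quad(i\ge 0)$$
valid for any $R/Q$-module $M$, which follows from the Cartan--Eilenberg spectral sequence collapsing thanks to the Koszul self-duality.

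For (2), apply $-\otimes_R k$ to the short exact sequence. Because $I/Q$ sits inside $\fkm(R/Q)$, any lift to a chain map between minimal free resolutions has entries in $\fkm$, and hence the induced map $\Tor^R_i((R/I)^{\oplus t},k)\to\Tor^R_i(R/Q,k)$ vanishes for every $i$. The long exact sequence therefore splits into
$$0\to\Tor^R_i(R/Q,k)\to\Tor^R_i(R/I,k)\to\Tor^R_{i-1}((R/I)^{\oplus t},k)\to 0,$$
yielding the recursion $\beta_i^R(R/I)=\binom{d}{i}+t\,\beta_{i-1}^R(R/I)$, which solves to the closed form in (2). For (3), I would reduce to $d=0$ by noting that inside $\bar R:=R/Q$ the ideal $\bar I:=I/Q$ satisfies $\bar I^2=0$ and $\bar I\cong(\bar R/\bar I)^{\oplus t}$, so $\Syz^1_{\bar R}(\bar R/\bar I)\cong(\bar R/\bar I)^{\oplus t}$ and iteration yields $\Syz^{i+1}_{\bar R}(\bar R/\bar I)\cong(\Syz^i_{\bar R}(\bar R/\bar I))^{\oplus t}$ for every $i\ge 0$. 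Constructing the minimal $R$-resolution of $R/I$ as the total complex of the minimal $\bar R$-resolution tensored with the Koszul complex on $a_1,\dots,a_d$ transfers this self-similarity to $R$ beyond step $d$. Statement (1) is extracted from the same construction: the Koszul strand of $\partial_i$ has entries in $Q\subseteq I$ and the strand lifted from $\bar R$ has entries in $I$ (because its residues mod $Q$ lie in $\bar I$), so $\mathbf I(\partial_i)\subseteq I$; the reverse containment follows from $\mathbf I(\partial_1)=I$ together with the presence of each generator of $I$ among the entries of $\partial_i$ for $i\ge 2$.

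For (4), Rees's formula gives $\Ext^d_R(R/I,R)\cong\Hom_{R/Q}(R/I,R/Q)=(0:_{R/Q}I)$; since $I/Q$ is a free $R/I$-module, the annihilator of $I$ inside $R/Q$ coincides with $I/Q\cong(R/I)^{\oplus t}$. Vanishing for $i<d$ follows from Rees in positive degrees together with $\Hom_R(R/I,R)=0$, valid once $d\ge 1$ because $I$ contains a nonzerodivisor. For $i>d$, apply $\Hom_{R/Q}(-,R/Q)$ to the short exact sequence; the long exact sequence collapses for $p\ge 1$ to $\Ext^{p+1}_{R/Q}(R/I,R/Q)\cong\Ext^p_{R/Q}(R/I,R/Q)^{\oplus t}$, while the initial four-term piece
$$0\to(R/I)^{\oplus t}\to R/Q\xrightarrow{\varphi}(R/I)^{\oplus t^2}\to\Ext^1_{R/Q}(R/I,R/Q)\to 0$$
is handled by identifying $\varphi$, via the basis $\bar b_1,\dots,\bar b_t$ of $I/Q$, as the diagonal map $\bar r\mapsto(\bar r\bar b_1,\dots,\bar r\bar b_t)\in(I/Q)^t\cong(R/I)^{t^2}$; its cokernel is a free $R/I$-module of rank $t^2-1$, giving $\Ext^{d+1}_R(R/I,R)\cong(R/I)^{\oplus(t^2-1)}$, and iteration completes the formula. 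Finally, $\rmr(R)=\dim_k\Soc(R/Q)$ by Rees applied to $M=k$; any $\bar x\in\Soc(R/Q)$ satisfies $xb_j\in Q$ for every $j$, so $\bar x\in\Ann_{R/Q}(I/Q)=I/Q$, yielding $\Soc(R/Q)=\Soc(I/Q)=\Soc(R/I)^{\oplus t}$ of dimension $t\cdot\rmr(R/I)$. The chief technical obstacle is the precise identification of $\varphi$ in the four-term sequence, which requires tracking the free basis of $I/Q$ through the dualization.
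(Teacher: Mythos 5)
This theorem is quoted in the paper from \cite{GOTWY1, GTT2} without proof, so there is no in-paper argument to compare against; your proposal has to stand on its own. Your overall strategy --- reduce modulo $Q$ to the Artinian ring $R/Q$, exploit the exact sequence $0\to I/Q\to R/Q\to R/I\to 0$ with $I/Q\cong (R/I)^{\oplus t}$, and use Rees's change-of-rings isomorphism for $\Ext$ --- is the standard one, and your treatment of assertion (4) and of the formula $\rmr(R)=t\cdot\rmr(R/I)$ is essentially complete and correct, including the identification of the map $\varphi$ and of $(0:_{R/Q}I)=I/Q$.

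There are, however, two genuine gaps in your treatment of (1)--(3). First, for (2) you justify the vanishing of $\Tor^R_i(I/Q,k)\to\Tor^R_i(R/Q,k)$ by claiming that a map $f\colon M\to N$ with $f(M)\subseteq\fkm N$ lifts to a chain map with all entries in $\fkm$. Only the degree-zero component of the lift is controlled this way; the higher components need not have entries in $\fkm$, and the principle is false in general. For instance, over $R=k[[x]]$ the map $k\to R/(x^2)$, $1\mapsto \bar x$, has image in $\fkm\cdot R/(x^2)$ but induces an isomorphism on $\Tor_1(-,k)$. The vanishing you need is true, but it is essentially equivalent to the assertion that the Koszul complex embeds degreewise as a direct summand of the minimal resolution of $R/I$ --- which is the actual content of the theorem, not an input to it. Second, the ``total complex of the minimal $R/Q$-resolution tensored with the Koszul complex'' is not a complex: lifting the $R/Q$-differentials to $R$ gives $\tilde\partial^2\equiv 0$ only modulo $Q$, so Shamash-type correction terms are required, and even after correcting one must still prove minimality of the result (note that the naive Betti relation $\beta^R_i=\beta^{R/Q}_i+\beta^{R/Q}_{i-1}$ for a single regular element $x$ killing the module requires $x\notin\fkm^2$ or an extra argument, and the $a_i$ here need not avoid $\fkm^2$). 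This construction is also what your argument for (1) and (3) leans on, so those parts inherit the gap. Closing it is precisely the work done in \cite[Section 7]{GOTWY1}, which builds the resolution explicitly and verifies minimality; as written, your proposal assumes the hard part.
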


Because Ulrich ideals are a very special kind of ideals, it seems natural to expect that, in the behavior of Ulrich ideals, there might be contained ample information on base rings, once they exist. As stated above, this is the case of two-dimensional Gorenstein rational singularities, and the present objects of study are rings of dimension one.

In what follows, unless otherwise specified, let $(R,\fkm)$ be a Cohen-Macaulay local ring with $\dim R = 1$.  Our main targets are chains $I_n \subsetneq I_{n-1} \subsetneq \cdots \subsetneq I_1$~($n \ge 2$) of Ulrich ideals in $R$.  Let $I$ be an Ulrich ideal of $R$ with a reduction $Q=(a)$. We set $A = I:I$ in the total ring  of fractions of $R$. Hence, $A$ is a birational  finite extension of $R$, and $I = aA$. Firstly, we study the close connection between the structure of the ideal $I$ and the $R$-algebra $A$. Secondly, let $J$ be an Ulrich ideal of $R$ and assume that $I \subsetneq J$. Then, we will show that $\mu_R(J) = \mu_R(I)$, where $\mu_R(*)$ denotes the number of elements in a minimal system of generators, and that $J = (b) + I$ for some $a,b \in \fkm$ with $I = abA$. Consequently, we have the following, which is one of the main results of this paper.

\begin{thm}\label{1.1}
Let $(R,\m)$ be a Cohen-Macaulay local ring with $\dim R=1$.
Then the following assertions hold true.
\begin{enumerate}[{\rm (1)}]
 \item Let $I$ be an Ulrich ideal of $R$ and $A = I:I$. Let $a_1,a_2, \ldots, a_n\in\m$~$($$n \ge 2$$)$ and assume that $I = a_1a_2\cdots a_nA$. For $1\leq i\leq n$, let $I_i=(a_1a_2 \cdots a_i)+I$. Then each $I_i$ is an Ulrich ideal of $R$ and $$I=I_n\subsetneq I_{n-1}\subsetneq \cdots \subsetneq I_1.$$ 
\item Conversely, let $I_1, I_2, \ldots, I_n$~$($$n \ge 2$$)$ be Ulrich ideals of $R$ and suppose that $$I_n\subsetneq I_{n-1}\subsetneq \cdots \subsetneq I_1.$$ We set $I = I_n$ and $A=I:I$. Then there exist elements $a_1,a_2, \ldots, a_n\in\m$ such that $I = a_1a_2\cdots a_nA$ and $I_i=(a_1a_2\cdots a_i)+I$ for all $1\leq i\leq n-1$. 
\end{enumerate}
\end{thm}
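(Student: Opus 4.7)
My approach to both parts rests on a dictionary between Ulrich ideals of $R$ and their endomorphism rings. For any Ulrich ideal $I$ with parameter reduction $(a)$, setting $A=I:I$, one has $I=aA$: indeed $I^{2}=aI$ forces $I\subseteq aA$ (since $(x/a)I\subseteq I$ for $x\in I$), and $aA\subseteq I$ since $a\in I$ and $A$ stabilises $I$. The freeness condition of Definition~\ref{def1.1}(3) then translates, via the canonical $R$-isomorphism $A/I\cong I/I^{2}$ sending $x\mapsto ax$, into the statement that $A/I$ is free of rank $\mu_{R}(I)$ over $R/I$; splitting off the summand generated by $\overline{1}$ yields that $A/R$ is a free $R/I$-module of rank $\mu_{R}(I)-1$. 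I would record this at the outset, since both directions of the theorem build on it.

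\textit{Direction $(1)$.} Set $c_{i}:=a_{1}\cdots a_{i}$ and introduce the $R$-submodule $B_{i}:=R+(a_{i+1}\cdots a_{n})A$ of $A$, with the convention $a_{n+1}\cdots a_{n}=1$ so that $B_{n}=A$. I would check in sequence: (a) $B_{i}$ is a subring of $A$ (closure under multiplication uses $A\cdot A=A$); (b) $I_{i}=c_{i}B_{i}\subseteq R$, by expanding $c_{i}\bigl(R+(a_{i+1}\cdots a_{n})A\bigr)=(c_{i})+(a_{1}\cdots a_{n})A=(c_{i})+I$; (c) $I_{i}^{2}=c_{i}^{2}B_{i}^{2}=c_{i}I_{i}$, so $(c_{i})$ is a parameter reduction of $I_{i}$; and (d) the quotient $I_{i}/(c_{i})\cong B_{i}/R$ is free of rank $\mu_{R}(I)-1$ over $R/I_{i}$. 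The core calculation is (d): multiplication by $a_{i+1}\cdots a_{n}$ induces a surjection $A/R\twoheadrightarrow B_{i}/R$, and the isomorphism $A/R\cong (R/I)^{\mu_{R}(I)-1}$ reduces its kernel to $\bigl((I:a_{i+1}\cdots a_{n})/I\bigr)^{\mu_{R}(I)-1}$. I would then identify $(I:a_{i+1}\cdots a_{n})=I_{i}$ directly, by expanding an element $y\in A$ along a free $R/I$-basis of $A/I$ containing $\overline{1}$ and observing that $c_{i}y\in R$ forces each non-leading coefficient of $y$ to be annihilated by $c_{i}$ modulo $I$, placing that portion of $c_{i}y$ inside $IA=I$ and the leading portion inside $(c_{i})$.

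The delicate point, and the main obstacle, is verifying the non-triviality $I_{i}\ne (c_{i})$ together with the strict inclusions $I_{i+1}\subsetneq I_{i}$. Both reduce, via a Nakayama-style argument, to the following claim: no product of elements of $\m$ can be a unit in $A$. Indeed, if $c_{i}\in I$, then $c_{i}=c_{i}(a_{i+1}\cdots a_{n})y$ for some $y\in A$, so (cancelling the nonzerodivisor $c_{i}$) $a_{i+1}\cdots a_{n}$ is a unit in $A$; a similar cancellation handles $B_{i}=R$. But $A$ is a module-finite extension of the local ring $R$, hence semilocal with $\m\subseteq \rad(A)$, so every $a_{j}\in\m$ is a non-unit in $A$, giving the required contradiction.

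\textit{Direction $(2)$.} Setting $A=I:I$, fix $a\in\m$ with $I=aA$; this element will play the role of the full product $a_{1}\cdots a_{n}$. For each $i$, let $A_{i}:=I_{i}:I_{i}$. I would first show that the Ulrich inclusions $I_{n}\subsetneq\cdots\subsetneq I_{1}$ translate into an ascending tower of birational finite extensions $R\subsetneq A_{1}\subseteq\cdots\subseteq A_{n}=A$: if $I_{i}\subseteq I_{i-1}$ are both Ulrich with $I_{i}=\alpha_{i}A_{i}$ and $I_{i-1}=\alpha_{i-1}A_{i-1}$, then $\alpha_{i}\in \alpha_{i-1}A_{i-1}$ yields $\alpha_{i}=\alpha_{i-1}v$ for some $v\in A_{i-1}$, and multiplying through by $\alpha_{i-1}^{-1}\in Q(R)$ identifies $A_{i}$ as an overring of $A_{i-1}$. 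Applying the classification of (1)(d) link by link then expresses each extension $A_{i-1}\hookrightarrow A_{i}$ in the form $A_{i-1}=R+(d_{i})A_{i}$ for a distinguished element $d_{i}\in A_{i}$; the ratios of these successive distinguished elements, pulled back to $R$ using that $c_{i}B_{i}\subseteq R$, give the sequence $a_{1},\ldots,a_{n}\in\m$ with $I=a_{1}\cdots a_{n}A$. Consistency of the reconstructed chain with the given $I_{i}$ is then checked by applying Direction~(1) forward to the constructed sequence; the principal obstacle lies in verifying that the extracted ratios indeed lie in $\m$ and recover the given $I_{i}$, for which the Nakayama argument of the previous paragraph together with the uniqueness inherent in the dictionary suffices.
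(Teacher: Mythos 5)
Your Direction (1) is correct and runs along essentially the same lines as the paper's Corollary \ref{3.6}: writing $c_i=a_1\cdots a_i$ and $d_i=a_{i+1}\cdots a_n$, you realize $I_i=c_iB_i$ with $B_i=R+d_iA$ a ring, get $I_i^2=c_iI_i$, and identify $I_i/(c_i)\cong B_i/R\cong (R/I_i)^{\oplus t}$ by expanding along a free $R/I$-basis of $A/R$ and computing $I:_Rd_i=(c_i)+I$ (this is exactly the paper's Lemma \ref{2.4}). One small repair: from $B_i=R$, i.e. $d_iA\subseteq R$, you cannot conclude by ``cancellation'' that $d_i$ is a unit of $A$ (an ideal $d_iA$ of $A$ may well sit inside $R$ without $d_i$ being a unit); but non-degeneracy is automatic from your own computation, since $I_i/(c_i)\cong(R/I_i)^{\oplus t}\neq(0)$.

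Direction (2) has a genuine gap. Everything reduces to the following single-link statement, which you assert but do not prove: if $I\subsetneq J$ are Ulrich and $J^2=bJ$, then there exists $c\in\m$ with $I=bcA$, equivalently $J:J=R+cA$. You appeal to ``the classification of (1)(d) link by link'' and to ``the uniqueness inherent in the dictionary,'' but Direction (1) contains no classification: it only constructs a chain from a \emph{given} factorization $I=a_1\cdots a_nA$, and says nothing about whether an arbitrary Ulrich ideal lying over $I$ arises from such a factorization. That converse is precisely the content of the paper's Theorem \ref{2.9}(1) (packaged as Theorem \ref{3.1}(3)), and it requires real work: after $J=(b)+I$ one shows $I=[b\cdot(I:_RJ)]A$ (Lemma \ref{2.6}(4)), then compares the free $R/I$-basis of $A/R$ with the induced free basis of $A/B$ over $B/\fka$, where $B=J:J$ and $\fka=b^{-1}I$, to get $J/(b)\cong([I:_RJ]/I)^{\oplus t}$ (Corollary \ref{2.7}); the freeness of $J/(b)$ over $R/J$ together with the length count $\ell_R([I:_RJ]/I)=\ell_R(R/J)$ then forces $[I:_RJ]/I$ to be cyclic (Proposition \ref{2.8}), i.e. $I:_RJ=(c)+I$, and Nakayama converts $I=bcA+bI$ into $I=bcA$. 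Without this, your ``distinguished elements $d_i$'' and their ``ratios'' are not known to exist and the inductive reconstruction of $a_1,\dots,a_n$ cannot start. (A secondary slip: from $\alpha_i=\alpha_{i-1}v$ you only get $vA_i\subseteq A_{i-1}$, not the tower $A_{i-1}\subseteq A_i$; the tower itself is fine, but the correct route is $A_i=R:I_i$ for Ulrich ideals, so that the inclusions reverse under $R:(-)$.)
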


Let $I$ and $J$ be Ulrich ideals of $R$ and assume that $I \subsetneq J$. We set $B = J:J$. Let us write $J = (b) + I$ for some $b \in \fkm$. We then have that $J^2 = bJ$ and that $B$ is a local ring with the maximal ideal $\fkn = \fkm + \displaystyle\frac{I}{b}$, where $\displaystyle\frac{I}{b} = \left\{\displaystyle\frac{i}{b} \mid i \in I\right\}~(=b^{-1}I)$. We furthermore have the following.

\begin{thm}\label{1.2}
$\displaystyle\frac{I}{b}$ is an Ulrich ideal of the Cohen-Macaulay local ring $B$ of dimension one and there is a one-to-one correspondence   $\fka \mapsto \displaystyle\frac{\fka}{b}$ between the Ulrich ideals $\fka$ of $R$ such that $I \subseteq \fka \subsetneq J$ and the Ulrich ideals $\fkb$ of $B$ such that $\displaystyle\frac{I}{b} \subseteq \fkb$.
\end{thm}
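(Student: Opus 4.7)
Plan. The proof is in two stages: first $\frac{I}{b}$ is shown to be an Ulrich ideal of $B$, then the bijection is established.

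Stage 1: Ulrich property. By the structure theorem for chains of Ulrich ideals (Theorem~\ref{1.1}(2) applied to $I \subsetneq J$, with choice $a_1 = b$), there exists $c \in \m$ such that $I = bcA$, where $A = I:I$. Hence $\frac{I}{b} = cA$, and since $c \in R \subseteq A$ and $cA$ is an $A$-ideal, the ring $B = J/b = R + cA$ is contained in $A$. Direct computation gives $(\frac{I}{b})^2 = c^2 A = c \cdot \frac{I}{b}$, so $(c)B$ is a principal reduction and $c$ is a nonzerodivisor of $B$ (inherited from $R$). For the strict inequality $\frac{I}{b} \neq (c)B$: the alternative $A = B$ would give $A = R + cA$, and Nakayama applied to the finitely generated $R$-module $A/R$ with $c \in \m$ forces $A = R$, contradicting $\mu_R(I) \geq 2$.

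For the freeness of $\frac{I}{b}/(c)B$ over $B/\frac{I}{b}$: the Ulrich property of $J$ gives $J/(b)R \cong B/R$ free of rank $t := \mu_R(I) - 1 = \mu_R(J) - 1$ over $R/J$, while directly $B/R$ is the image of $cA$ in $A/R$, and the Ulrich property of $I$ gives $A/R \cong (R/I)^t$ as $R/I$-modules. Applying Theorem~\ref{1.1}(1) to the commuted factorization $I = c \cdot b \cdot A$ yields that $J' := (c) + I$ is itself an Ulrich ideal of $R$. By Krull--Schmidt applied to indecomposable cyclic $R$-modules, the comparison of the two descriptions of $B/R$ (as $(R/J)^t$ on one side, and on the other as a submodule of $(R/I)^t$ realized as $((bR+I)/I)^t$ via the analogue with $J'$) identifies annihilators and yields $R \cap cA = J'$. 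Hence $B/\frac{I}{b} = (R + cA)/cA \cong R/(R \cap cA) = R/J'$, while $\frac{I}{b}/(c)B \cong cA/cB \cong A/B \cong (A/R)/(cA+R)/R \cong (R/(I+cR))^t = (R/J')^t$, so $\frac{I}{b}/(c)B \cong (B/\frac{I}{b})^t$, the desired freeness.

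Stage 2: Bijection. The map $\fka \mapsto \frac{\fka}{b}$ is well-defined: for any Ulrich $\fka$ with $I \subseteq \fka \subsetneq J$, Stage 1 applied to the pair $\fka \subsetneq J$ shows $\frac{\fka}{b}$ is Ulrich in $B$, and $I \subseteq \fka$ gives $\frac{I}{b} \subseteq \frac{\fka}{b}$. For the inverse $\fkb \mapsto b\fkb$: given an Ulrich ideal $\fkb$ of $B$ containing $\frac{I}{b}$ with reduction $(d)B$, the product $b\fkb$ is an $R$-ideal with $I \subseteq b\fkb \subsetneq J$ (the first containment since $\frac{I}{b} \subseteq \fkb$, the second since $\fkb \subsetneq B$). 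One has $(b\fkb)^2 = (bd)(b\fkb)$, so $(bd)R$ is a principal reduction; non-triviality $b\fkb \neq (bd)R$ follows because $\fkb = dR$ would force $dB = B \cdot dR \subseteq dR$, i.e., $B \subseteq R$, which is impossible. Freeness of $b\fkb/(bd)R \cong \fkb/dR$ over $R/b\fkb \cong B/\fkb$ follows by the symmetric argument, using the Ulrich structure of $\fkb$ in $B$ to identify $R \cap \fkb = b\fkb$. The two maps are mutually inverse since $b$ is a nonzerodivisor: $b \cdot \frac{\fka}{b} = \fka$ and $\frac{b\fkb}{b} = \fkb$.

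The principal obstacle is the identification of the cyclic $R$-module structures, e.g.\ $R \cap cA = J' = (c) + I$ in Stage 1 and its analogue $R \cap \fkb = b\fkb$ in Stage 2. Both are handled by recognizing the accompanying Ulrich ideal (here $J'$, obtained via the commuted factorization $I = cbA$) and invoking Krull--Schmidt to match cyclic summands of the free module arising from the Ulrich freeness of the larger ideal.
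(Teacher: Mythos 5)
Your Stage 1 is essentially sound and runs parallel to the paper's Lemma \ref{2.6}\,(1) and Theorem \ref{3.7}\,(3): the element $c$ with $I=bcA$ comes from Theorem \ref{3.1}\,(3), and the freeness of $\frac{I}{b}/cB$ over $B/\frac{I}{b}$ amounts to $A/B\cong(B/\frac{I}{b})^{\oplus t}$. However, the Krull--Schmidt detour is both unnecessary and, as written, not a proof of what you claim: a comparison of isomorphism types of $B/R$ cannot by itself identify the ideal $R\cap cA$ (and the intermediate object should involve $c$, not $b$). The identity $R\cap cA=(c)+I$ is immediate from Lemma \ref{2.4} for one inclusion and from $c\in cA$, $I=bcA\subseteq cA$ for the other. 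Your reduction of the well-definedness of $\fka\mapsto\frac{\fka}{b}$ to Stage 1 applied to the pair $\fka\subsetneq J$ is legitimate, since $J=(b)+I\subseteq (b)+\fka\subseteq J$.

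The genuine gap is in Stage 2, in the verification that $b\fkb$ is an Ulrich ideal of $R$. Your argument rests on the identity $R\cap\fkb=b\fkb$ and the resulting isomorphism $R/b\fkb\cong B/\fkb$, and both are false. Take $R=k[[X,Y]]/(Y^2)$, $J=\fkm=(x,y)$, $I=(x^2,y)$, so that $b=x$, $B=R[y/x]$ with maximal ideal $\fkn$, and $\frac{I}{b}=\fkn$. For $\fkb=\fkn$ one gets $b\fkb=(x^2,y)=I$, whereas $R\cap\fkn=(x,y)=\fkm$; accordingly $\ell_R(R/b\fkb)=2\neq 1=\ell_B(B/\fkb)$. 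The correct analogue of your Stage 1 identity carries an extra principal summand ($R\cap cA=(c)+I$, not $I$), and in general $b\fkb/(bd)R$ is an extension of $b\fkb/(bd)B\cong\fkb/dB\cong(B/\fkb)^{\oplus t}$ by $(bd)B/(bd)R\cong B/R\cong(R/J)^{\oplus t}$, so freeness over $R/b\fkb$ does not follow from the ``symmetric argument.'' This is exactly the point where the paper works harder: it first replaces the reduction $\alpha$ of $\fkb$ by an element $a\in\fkm$ with $\fkb=aB+\frac{I}{b}=aR+\frac{I}{b}$ and $\fkb^2=a\fkb$ (Lemma \ref{3.5} applied in $B$), then produces $c\in\fkm$ with $I=abcA$, so that $b\fkb=(ab)+I$ falls under the criterion of Corollary \ref{3.6} and Theorem \ref{1.1}\,(1), whose proof is where the freeness of $b\fkb/(ab)$ is actually checked. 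You need some substitute for that step; as it stands the surjectivity of the correspondence is not established.
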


These two theorems convey to us that the behavior of chains of Ulrich ideals in a given one-dimensional Cohen-Macaulay local ring could be understood via the correspondence, and the relationship between the structure of Cohen-Macaulay local rings $R$ and $B$ could be grasped through the correspondence, which we shall closely discuss in this paper.

We now explain how this paper is organized. In Section 2, we will summarize some preliminaries, which we shall need later to prove the main results. The proof of Theorems \ref{1.1} and \ref{1.2} will be given in Section 3. In Section 4, we shall study the case  where the base rings $R$ are not regular but possess minimal multiplicity (\cite{S}), and show that the set of Ulrich ideals of $R$ are totally ordered with respect to inclusion. In Section 5, we explore the case where $R$ is a $\GGL$ ring (\cite{GK}).

In what follows, let $(R,\fkm)$ be a Cohen-Macaulay local ring with $\dim R = 1$. Let $\rmQ(R)$ (resp. $\calX_R$) stand for  the total ring of fractions of $R$ (resp. the set of all the Ulrich ideals in $R$). We denote by $\overline{R}$, the integral closure of $R$ in $\rmQ(R)$. For a finitely generated $R$-module $M$, let $\mu_R(M)$ (resp. $\ell_R(M)$) be the number of elements in a minimal system of generators (resp. the length) of $M$. For each $\m$-primary ideal $\fka$ of $R$, let $$\rme_\fka^0(R) = \underset{n \to \infty}{\lim}\frac{\ell_R(R/\fka^{n})}{n}$$ stand for the multiplicity of $R$ with respect to $\fka$. By $\rmv(R)$ (resp. $\rme(R))$ we denote the embedding dimension $\mu_R(\fkm)$ of $R$ (resp. $\rme_\m^0(R)$). Let $\widehat{R}$ denote the $\m$-adic completion of $R$.



\section{Preliminaries}\label{section2}

Let us summarize preliminary facts on $\fkm$-primary ideals of $R$, which we need throughout this paper.

In this section, let $I$ be an $\fkm$-primary ideal of $R$, for which we will assume Condition (C) in Definition \ref{2.2} to be satisfied. This condition is a partial extraction from Definition \ref{def1.1} of Ulrich ideals; hence  every Ulrich ideal satisfies it (see Remark \ref{2.3}).

Firstly, we assume that $I$ contains an element $a \in I$ with $I^2=aI$. We set $A=I:I$ and $$\displaystyle\frac{I}{a} =\left\{\displaystyle\frac{x}{a} \mid x \in I\right\}=a^{-1}I$$ in $\rmQ(R)$. Therefore, $A$ is a birational finite extension of $R$ such that $R \subseteq A\subseteq \overline{R}$, and  $A = \displaystyle\frac{I}{a}$, because $I^2 = aI$; hence $I = aA$. We then  have the following.

\begin{prop}\label{2.1}  If $I=(a):_RI$, then $A=R:I$ and $I = R : A$, whence $R:(R:I)=I$.
\end{prop}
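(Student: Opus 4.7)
The plan is to exploit the equality $A=a^{-1}I$ (which is recorded just before the statement, coming from $I^{2}=aI$) together with the hypothesis $I=(a):_{R}I$, and to read off the three assertions in turn from easy double-inclusion arguments.

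First I would establish $A=R:I$. The inclusion $A\subseteq R:I$ is immediate from the definition $A=I:I$, since $I\subseteq R$. For the reverse inclusion, let $x\in R:I$ and pick any $y\in I$. Since $I^{2}=aI$ we have $yI\subseteq aI$, and therefore
$$
(xy)I\subseteq x(aI)=a(xI)\subseteq aR=(a).
$$
Because $xy\in R$ (as $y\in I$ and $xI\subseteq R$), the hypothesis $I=(a):_{R}I$ forces $xy\in I$. Varying $y$ over $I$ gives $xI\subseteq I$, i.e. $x\in I:I=A$. Hence $R:I\subseteq A$.

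Next I would prove $I=R:A$. The inclusion $I\subseteq R:A$ follows from the direct computation $IA=I\cdot a^{-1}I=a^{-1}I^{2}=a^{-1}(aI)=I\subseteq R$. For the reverse inclusion, take $x\in R:A$. Testing against $1\in A$ shows $x\in R$, while $xA\subseteq R$ combined with $A=a^{-1}I$ gives $xI\subseteq aR=(a)$. The hypothesis then yields $x\in(a):_{R}I=I$, so $R:A\subseteq I$.

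With $A=R:I$ and $I=R:A$ in hand, the final reflexivity statement $R:(R:I)=I$ is obtained by substitution. The only subtle step is the inclusion $R:I\subseteq A$: one has to notice that the displayed calculation $(xy)I\subseteq(a)$ converts the a priori weak membership $xy\in R$ into the required $xy\in I$ via the assumption $I=(a):_{R}I$. Everything else is a matter of unwinding $A=a^{-1}I$ and the relation $I^{2}=aI$, so I do not anticipate further obstacles.
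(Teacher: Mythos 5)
Your proof is correct, and it is essentially the paper's argument written out elementwise: the paper condenses your two double inclusions into the fractional-ideal identities $I=(a):_RI=(a):I=a[R:I]$ (giving $A=a^{-1}I=R:I$) and $R:A=R:\frac{I}{a}=a[R:I]=aA=I$, using exactly the same ingredients ($A=a^{-1}I$, $I^2=aI$, and the hypothesis $I=(a):_RI$). No substantive difference.
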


\begin{proof}
Notice that $I = (a):_RI = (a) : I=a[R:I]$ and we have $A=R:I$, because $I = aA$. We get $R:A=I$, since $R:A= R:\displaystyle\frac{I}{a}= a[R:I] = aA$. 
\end{proof}

Let us now give the following.

\begin{definition}\label{2.2}
Let $I$ be an $\m$-primary ideal of $R$ and set $A=I:I$. We say that $I$ satisfies Condition (C), if 
\begin{enumerate}[{\rm (i)}]
\item $A/R \cong (R/I)^{\oplus t}$ as an $R$-module for some $t>0$, and
\item $A=R:I$.  
\end{enumerate}
\end{definition}
Consequently, $I=R:A$ by Condition (i), when $I$ satisfies Condition (C).

\begin{remark}\label{2.3}
Let $I \in \calX_R$. Then $I$ satisfies Condition (C). In fact, choose $a \in I$ so that $I^2 = aI$. Then, $I/(a) \cong (R/I)^{\oplus t}$ as an $R/I$-module, where $t = \mu_R(I)-1 > 0$ (\cite[Lemma 2.3]{GOTWY1}). Therefore, $I = (a):_RI$, so that $I$ satisfies the hypothesis in Proposition \ref{2.1}, whence $A = R:I$.  Notice that $A/R \cong I/(a) \cong (R/I)^{\oplus t}$, because $I = aA$.
\end{remark}

We assume, throughout this section, that our $\m$-primary ideal $I$ satisfies Condition (C). We choose elements $\{f_i\}_{1 \le i \le t}$ of $A$ so that $$A=R+\sum_{i=1}^tRf_i.$$ Therefore, the images $\{\overline{f_i}\}_{1 \le i \le t}$ of $\{f_i\}_{1 \le i \le t}$ in $A/R$ form a free basis of the $R/I$-module $A/R$. We then have the following.

\begin{lemma}\label{2.4}
$aA\cap R\subseteq (a)+I$ for all $ a\in R$.
\end{lemma}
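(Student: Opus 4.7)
The plan is to unwind the statement using the explicit decomposition $A = R + \sum_{i=1}^{t} Rf_i$. Take $x \in aA \cap R$ and write $x = ay$ with $y \in A$. Using the decomposition, express $y = r + \sum_{i=1}^{t} r_i f_i$ with $r, r_i \in R$, so that
\[
x - ar = \sum_{i=1}^{t} (ar_i) f_i.
\]
The left-hand side lies in $R$, hence so does the right-hand side. Passing to the quotient $A/R$, the relation $\sum_{i=1}^{t} \overline{ar_i f_i} = 0$ combined with the fact that $\{\overline{f_i}\}_{1\le i\le t}$ is a free $R/I$-basis of $A/R$ forces $ar_i \in I$ for every $i$.

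Next I would promote this coefficientwise information back inside $R$. Since each $ar_i \in I$ and each $f_i \in A$, the products $ar_i f_i$ lie in $IA$. Here is where Condition~(C) is used in its form $A = I:I$: this inclusion gives $IA \subseteq I$, and the reverse inclusion is automatic (as $1 \in A$), so $IA = I$. Consequently $\sum_{i=1}^{t}(ar_i) f_i \in I$, and hence $x = ar + \sum_{i=1}^{t}(ar_i) f_i \in (a) + I$, which is exactly what we want.

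The only nontrivial ingredient is keeping track of which hypothesis is doing what: Condition (i) of (C) guarantees the free basis $\{\overline{f_i}\}$, which is what lets us pass from "a sum of $f_i$-terms lies in $R$" to "each coefficient lies in $I$"; meanwhile the equality $A = I:I$ (implicit in the setup) supplies $IA = I$, turning the $I$-coefficients into an actual element of $I$. I do not expect a genuine obstacle — the argument is essentially formal once the right bookkeeping is set up — but the step most easily mishandled is the transition from "$ar_i \in I$" to "$ar_i f_i \in I$", where one must remember that $f_i$ lives in $A$ (not $R$) and therefore invoke $IA = I$ rather than a bare ideal closure.
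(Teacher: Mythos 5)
Your proof is correct and is essentially identical to the paper's own argument: the same decomposition $A = R + \sum_{i=1}^t Rf_i$, the same deduction that the coefficients $ar_i$ lie in $I$ because $\{\overline{f_i}\}$ is a free $R/I$-basis of $A/R$, and the same use of $IA = I$ to conclude. No issues.
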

\begin{proof}
Let $x \in aA\cap R$ and write $x = ay$ with $y \in A$. We write $y=c_0 + \sum_{i=1}^tc_if_i$ with $c_i \in R$. Then, $ac_i \in I$ for $1 \le i \le t$, since $x = ac_0 + \sum_{i=1}^t (ac_i)f_i \in R$. Therefore, $(ac_i)f_i \in IA = I$ for all $1 \le i \le t$, so that $x \in (a) +I$ as claimed.
\end{proof}

\begin{cor}\label{2.5}
Let $J$ be an $\fkm$-primary ideal of $R$ and assume that $J$ contains an element $b \in J$ such that $J^2 = bJ$ and $J = (b):_RJ$. If $I \subseteq J$, then $J=(b)+I$.
\end{cor}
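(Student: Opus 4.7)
The plan is to identify $J$ with $bB$, where $B := J:J$, embed $B$ inside $A := I:I$ by passing to the dual characterizations, and then invoke Lemma \ref{2.4} to force every element of $J$ into $(b) + I$.

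First, I would apply Proposition \ref{2.1} to the ideal $J$: the hypotheses $J^{2} = bJ$ and $J = (b):_{R} J$ give $B = R:J$ and $J = bB$. In particular every $x \in J$ has the form $x = by$ with $y \in B$, so $J \subseteq bB \cap R$.

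Next, I would compare the two overrings $A$ and $B$. The assumption $I \subseteq J$ reverses under $R$-duality to $R:J \subseteq R:I$. By Condition (C) on $I$ (see Definition \ref{2.2}), $R:I = A$, and we just observed $R:J = B$. Hence $B \subseteq A$, so $J = bB \subseteq bA$. Since $J \subseteq R$ (as $J$ is an ideal of $R$) and $b \in J \subseteq R$, this yields $J \subseteq bA \cap R$.

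Finally, Lemma \ref{2.4} applied to the element $b \in R$ gives $bA \cap R \subseteq (b) + I$, so $J \subseteq (b) + I$; the reverse inclusion is immediate from $b \in J$ and $I \subseteq J$. I do not expect a genuine obstacle: the only subtlety is that the inclusion $B \subseteq A$ does \emph{not} follow tautologically from $I \subseteq J$ (containment of ideals need not give containment of idealizers), but it does follow once we rewrite both sides as $R:J$ and $R:I$ respectively, which is exactly what Condition (C) and Proposition \ref{2.1} supply.
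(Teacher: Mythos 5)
Your proof is correct and follows essentially the same route as the paper: apply Proposition \ref{2.1} to $J$ to get $B = R:J$ and $J = bB$, use $I \subseteq J$ together with $A = R:I$ (Condition (C)) to get $B \subseteq A$, and conclude via Lemma \ref{2.4} that $J \subseteq bA \cap R \subseteq (b)+I$. Your remark that $B \subseteq A$ must be obtained through the colon-ideal descriptions rather than directly from the inclusion of ideals is exactly the point the paper's argument relies on.
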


\begin{proof}
We set $B=J:J$. Then $B=R:J$ and $J=bB$ by Proposition \ref{2.1}, so that  $B=R:J \subseteq A = R:I$, since $I\subseteq J$. Consequently, $J=bB\subseteq bA\cap R\subseteq (b)+I$ by Lemma \ref{2.4}, whence  $J=(b)+I$.
\end{proof}

In what follows, let $J$ be an $\fkm$-primary ideal of $R$ and assume that $J$ contains an element $b \in J$ such that $J^2 = bJ$ and $J = (b):_RJ$. We set $B=J:J$. Then $B=R:J=\displaystyle\frac{J}{b}$ by Proposition \ref{2.1}. Throughout,  suppose that $I \subsetneq J$. Therefore, since $J=(b)+I$ by Corollary \ref{2.5}, we get $$B=\displaystyle\frac{J}{b} = R+\displaystyle\frac{I}{b}.$$  Let  $\fka=\displaystyle\frac{I}{b}$. Therefore, $\fka$ is an ideal of $A$ containing $I$, so that $\fka$ is also an ideal of $B$ with $$R/(\fka \cap R) \cong B/\fka.$$

With this setting, we have the following.

\begin{lemma}\label{2.6}
The following assertions hold true.
\begin{enumerate}[{\rm (1)}]
\item $A/B\cong (B/\fka)^{\oplus t}$ as a $B$-module.
\item $\fka\cap R=I:_RJ$.
\item $\ell_R([I:_RJ]/I)=\ell_R(R/J)$.
\item $I=[b{\cdot}(I:_RJ)]A$.
\end{enumerate}
\end{lemma}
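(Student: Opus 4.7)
My plan is to prove the four parts in the order (2), (3), (4), (1), with a single structural claim serving as the engine for (4) and (1). Parts (2) and (3) I would handle first by bookkeeping. For (2), I would unravel $\fka \cap R = \{r \in R : br \in I\} = I :_R (b)$ and use $J = (b) + I$ (Corollary 2.5) together with $rI \subseteq I$ to conclude $\fka \cap R = I :_R J$. For (3), I would examine the multiplication-by-$b$ endomorphism of $R/I$, whose kernel is $(I :_R J)/I$ by (2) and whose image is $J/I$, so that the short exact sequence $0 \to (I :_R J)/I \to R/I \to J/I \to 0$ yields $\ell_R((I :_R J)/I) = \ell_R(R/J)$.

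The main obstacle is a structural claim underlying (4): for every $x \in \fka$ and every expression $x = r_0 + \sum_{i=1}^t r_i f_i$ with $r_i \in R$, each coefficient $r_i$ lies in $\fka \cap R$. I would establish this by exploiting $bx \in b\fka = I = aA$: writing $bx = ay$ for some $y \in A$ and expanding $y = s_0 + \sum s_j f_j$, the equation $br_0 + \sum br_i f_i = as_0 + \sum as_j f_j$ reduces modulo $I$ to $\overline{br_0} + \sum \overline{br_i}\,\overline{f_i} = 0$ in $A/I$, since $a \in I$ kills the right-hand side. Because $A/R \cong (R/I)^{\oplus t}$ is $R/I$-free, the sequence $0 \to R/I \to A/I \to A/R \to 0$ splits and yields the direct sum decomposition $A/I = R/I \oplus \bigoplus_{i=1}^t (R/I)\overline{f_i}$, forcing $\overline{br_i} = 0$ for every $i$; hence $r_i \in I :_R b = \fka \cap R$.

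Given the structural claim, the image of $\fka$ in $A/I$ equals $((\fka \cap R)/I)^{\oplus (t+1)}$, which coincides with the image of $(\fka \cap R)A$ there. Since both ideals contain $I$, I conclude $\fka = (\fka \cap R)A$, and multiplying through by $b$ gives (4): $I = b\fka = b(I :_R J)A$.

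For (1), I would define the $B$-linear map $\phi\colon B^{\oplus t} \to A/B$ by $(b_1, \dots, b_t) \mapsto \sum_i b_i f_i + B$; surjectivity follows from $A = R + \sum_i Rf_i \subseteq B + \sum_i Bf_i$. For the kernel, decomposing each $b_i = r_i + \alpha_i$ with $r_i \in R$ and $\alpha_i \in \fka$, the term $\sum_i \alpha_i f_i \in \fka A = \fka \subseteq B$ is absorbed, reducing the condition $\sum_i b_i f_i \in B$ to $\sum_i r_i f_i \in R + \fka$. The structural claim then forces each $r_i \in \fka \cap R \subseteq \fka$, hence $b_i \in \fka$. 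Combined with $R/(\fka \cap R) \cong B/\fka$ noted before the lemma, this gives the $B$-module isomorphism $A/B \cong (B/\fka)^{\oplus t}$.
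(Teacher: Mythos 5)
Your overall strategy runs on the same engine as the paper's, namely that the images $\{\overline{f_i}\}$ form a free basis of $A/R$ over $R/I$, so that membership relations in $A$ can be read off coefficientwise; parts (2) and (3) match the paper's computations, and for (1) and (4) you merely reorganize the paper's two ad hoc coefficient extractions into a single ``structural claim'' about elements of $\fka$, from which both assertions do follow correctly as you describe.

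There is, however, one genuine flaw, in the proof of the structural claim itself: you invoke $I=aA$, i.e.\ the existence of an element $a\in I$ with $I^2=aI$. That is not among the hypotheses of this lemma. From Definition \ref{2.2} onward the standing assumption on $I$ is only Condition (C); the existence of a principal reduction of $I$ is precisely what Theorem \ref{2.9}(1) later \emph{deduces} from Lemma \ref{2.6}(4), and in Section 5 these results are applied to $I=\fkc$, which satisfies Condition (C) but in general admits no such $a$ (when $S$ is not Gorenstein, $\fkc\ne\alpha S$ for every $\alpha$). So as written your argument assumes what is to be proved downstream. Fortunately the detour through $a$ is superfluous: you use it only to conclude that $br_0+\sum_{i=1}^t(br_i)f_i$ vanishes in $A/I$, and this is immediate from $bx\in b\fka=I$, which is the very first thing you write. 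Deleting the clause ``$=aA$: writing $bx=ay\ldots$'' and citing $bx\in I$ instead repairs the proof; the rest (the decomposition $A/I=R/I\oplus\bigoplus_{i=1}^t(R/I)\overline{f_i}$ coming from freeness of $A/R$, hence $br_i\in I$, hence $r_i\in I:_Rb=\fka\cap R$) then goes through, and with it your derivations of (4) and (1).
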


\begin{proof}
(1) Since $A=R+\sum_{i=1}^tRf_i$, we get $A/B=\sum_{i=1}^tB\overline{f_i}$ where $\overline{f_i}$ denotes the image of $f_i$ in $A/B$. Let $\{b_i\}_{1 \le i \le t}$ be elements of $B=\displaystyle\frac{J}{b}$ and assume that $\sum_{i=1}^tb_if_i \in B$. Then, since $\sum_{i=1}^t(bb_i)f_i \in R$ and $bb_i \in R$ for all $1 \le i \le t$, we have $bb_i\in I$, so that $b_i\in \displaystyle\frac{I}{b}=\fka$. Hence $A/B\cong (B/\fka)^{\oplus t}$ as a $B$-module.

(2) This is standard, because $J=(b)+I$ and $\fka = \displaystyle\frac{I}{b}$.

(3) Since $J/I=[(b)+I]/I\cong R/[I:_RJ]$, we get $$\ell_R([I:_RJ]/I)=\ell_R(R/I)-\ell_R(R/[I:_RJ])=\ell_R(R/I)-\ell_R(J/I)=\ell_R(R/J).$$

(4) We have $[b{\cdot}(I:_RJ)]A\subseteq I$, since $b{\cdot}(I:_RJ)\subseteq I$ and $IA=I$. To see the reverse inclusion, let $x \in I$. Then $x \in J=bB \subseteq bA$. We write $x=b[c_0+\sum_{i=1}^tc_if_i]$ with $c_i\in R$. Then $bc_i \in I$ for $1\leq i\leq t$ since $x \in R$, so that $(bc_i)f_i \in I$ for all $1 \le i\le t$, because $I$ is an ideal of $A$. Therefore, $bc_0\in I$, since $x=bc_0+\sum_{i=1}^t(bc_i)f_i \in I$. Consequently, $c_i\in I:_Rb = I:_RJ$ for all $0\leq i\leq t$, so that $x \in [b{\cdot}(I:_RJ)]A$ as wanted.
\end{proof}

\begin{cor}\label{2.7}
$J/(b)\cong([I:_RJ]/I)^{\oplus t}$ as an $R$-module. Hence $\ell_R(J/(b)) = t{\cdot}\ell_R(R/J)$.
\end{cor}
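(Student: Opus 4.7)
The plan is to identify $J/(b)$ with $B/R$ via multiplication by $1/b$, and then read off the structure of $B/R$ by combining the short exact sequence $0 \to B/R \to A/R \to A/B \to 0$ with Condition (C) and Lemma \ref{2.6}(1). The length statement will then follow automatically from Lemma \ref{2.6}(3).

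First I would observe that, since $B = R:J = J/b$ by Proposition \ref{2.1}, multiplication by $1/b$ in $\rmQ(R)$ defines an $R$-linear bijection $J \xrightarrow{\sim} B$ which carries $(b) = bR$ onto $R$. Passing to quotients, this yields $J/(b) \cong B/R$ as $R$-modules, and the corollary is reduced to establishing that $B/R \cong ((I:_RJ)/I)^{\oplus t}$.

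Next I would exploit that both structural isomorphisms at hand use the same elements $\{f_i\}_{1\le i\le t}$: Condition (C) provides $A = R + \sum_{i=1}^t Rf_i$ with free $R/I$-basis $\{\bar f_i\}$ of $A/R$, while Lemma \ref{2.6}(1) shows that the images of the same $\{f_i\}$ form a free $B/\fka$-basis of $A/B$. Consequently the canonical surjection $A/R \twoheadrightarrow A/B$ becomes, under these identifications, the $t$-fold direct sum of the surjection $R/I \twoheadrightarrow B/\fka$ induced by $R\hookrightarrow B\twoheadrightarrow B/\fka$. Since $B/\fka = (R+\fka)/\fka \cong R/(R\cap\fka) = R/(I:_RJ)$ by Lemma \ref{2.6}(2), the kernel of $R/I \to B/\fka$ is $(I:_RJ)/I$. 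Taking kernels across the direct sum identifies $B/R$ with $((I:_RJ)/I)^{\oplus t}$, as required. The length formula is then immediate:
$$\ell_R(J/(b)) = t\cdot\ell_R((I:_RJ)/I) = t\cdot\ell_R(R/J)$$
by Lemma \ref{2.6}(3).

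The only non-routine step is verifying that the quotient map $A/R \to A/B$ genuinely decomposes as the direct sum of the single map $R/I \to B/\fka$ under the two structural isomorphisms. This relies on the compatibility that for $z = r_0 + \sum_{i=1}^t c_i f_i \in A$ with $r_0,c_i\in R$, the class in $(R/I)^{\oplus t}$ is $(\bar c_1,\ldots,\bar c_t)$ and its image in $(B/\fka)^{\oplus t}$ is obtained by reducing each $c_i$ modulo $\fka\cap R = I:_RJ$; once this is in place, the identification of kernels is a routine diagram chase on the short exact sequence $0 \to B/R \to A/R \to A/B \to 0$.
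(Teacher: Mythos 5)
Your proposal is correct and follows essentially the same route as the paper: both identify $J/(b)$ with $B/R$, place it in the exact sequence $0 \to B/R \to A/R \to A/B \to 0$, and use that the same elements $\{f_i\}$ furnish free bases of $A/R$ over $R/I$ (Condition (C)) and of $A/B$ over $B/\fka$ (Lemma \ref{2.6}(1)) to identify $B/R$ with $([\fka\cap R]/I)^{\oplus t}=([I:_RJ]/I)^{\oplus t}$, concluding with Lemma \ref{2.6}(3). Your explicit verification of the compatibility of the two bases is exactly the content of the paper's commuting diagram of the two exact sequences.
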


\begin{proof}
We consider the exact sequence $$0 \to B/R \to A/R \to A/B \to 0$$ of $R$-modules. By Lemma  \ref{2.6} (1), $A/B$ is a free $B/\fka$-module of  rank $t$, possessing the images of $\{f_i\}_{1 \le i \le t}$ in $A/B$ as a free basis. Because $A/R$ is a free $R/I$-module of rank $t$, also possessing the images of $\{f_i\}_{1 \le i \le t}$ in $A/R$ as a free basis, we naturally get an isomorphism between the following two canonical exact sequences; 
$$\xymatrix{
0\ar[r] &B/R\ar[r]^{i}\ar[d]^{\wr}&A/R\ar[r]\ar[d]^{\wr} &A/B\ar[r]\ar[d]^{\wr} &0\\
0\ar[r] &([\fka\cap R]/I)^{\oplus t}\ar[r]^{i}\ar@{}[ur]|{\circlearrowleft} &(R/I)^{\oplus t}\ar[r]\ar[r]\ar@{}[ur]|{\circlearrowleft} &(B/\fka)^{\oplus t}\ar[r] &0
}$$
Since $B/R=\displaystyle\frac{J}{b}/R \cong J/(b)$ and $\fka \cap R=I:_RJ$ by Lemma \ref{2.6} (2), we get $$J/(b)\cong([I:_RJ]/I)^{\oplus t}.$$ The second assertion now follows from Lemma \ref{2.6} (3). 
\end{proof}

The following is the heart of this section.

\begin{proposition}\label{2.8}
The following conditions are equivalent.
\begin{enumerate}[{\rm (1)}]
\item $J\in\mathcal{X}_R$.
\item $\mu_R([I:_RJ]/I)=1$.
\item $[I:_RJ]/I\cong R/J$ as an $R$-module.
\end{enumerate}
When this is the case, $\mu_R(J)=t+1$.
\end{proposition}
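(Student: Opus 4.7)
The approach is to use Corollary 2.7 as the pivot: it gives an isomorphism $J/(b) \cong \bigl([I:_RJ]/I\bigr)^{\oplus t}$ of $R$-modules, so the question of whether $J/(b)$ is a free $R/J$-module (which, together with $J^2=bJ$ and $J\ne (b)$, is equivalent to $J\in\calX_R$) is transferred to a question about the single summand $[I:_RJ]/I$. I would prove the cycle $(3)\Rightarrow(1)\Rightarrow(2)\Rightarrow(3)$ and then establish the count of $\mu_R(J)$.

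The direction $(3)\Rightarrow(1)$ is immediate: if $[I:_RJ]/I\cong R/J$, then Corollary 2.7 yields $J/(b)\cong (R/J)^{\oplus t}$, which is free of positive rank over $R/J$; in particular $J\ne (b)$, and combined with $J^2=bJ$ this gives $J\in\calX_R$. For $(1)\Rightarrow(2)$, set $s := \mu_R(J)-1 > 0$; then $J/(b)\cong (R/J)^{\oplus s}$, and Corollary 2.7 yields $\bigl([I:_RJ]/I\bigr)^{\oplus t}\cong (R/J)^{\oplus s}$. Counting minimal generators gives $t\cdot \mu_R([I:_RJ]/I) = s$, and Lemma 2.6(3) gives $\ell_R([I:_RJ]/I)=\ell_R(R/J)$, so comparing lengths forces $s=t$ and hence $\mu_R([I:_RJ]/I)=1$. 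For $(2)\Rightarrow(3)$, the module $[I:_RJ]/I$ is annihilated by $J$ (by definition of the colon), so being cyclic it is of the form $R/K$ for some ideal $K\supseteq J$; Lemma 2.6(3) then forces $\ell_R(R/K)=\ell_R(R/J)$, whence $K=J$.

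For the final claim $\mu_R(J)=t+1$, the isomorphism $J/(b)\cong (R/J)^{\oplus t}$ yields $\mu_R(J/(b))=t$, so $\mu_R(J)\leq t+1$ is automatic. The main obstacle, and the place where the hypothesis $I\subsetneq J$ is genuinely used, is to exclude $\mu_R(J)=t$, which is equivalent to $b\in\fkm J$. My plan is to combine $J=(b)+I$ from Corollary 2.5 with the assumption $b\in\fkm J$ to obtain $J=\fkm J+I$; Nakayama's lemma applied to the finitely generated $R$-module $J/I$ then forces $J=I$, contradicting $I\subsetneq J$. Hence $b\notin\fkm J$, and $\mu_R(J)=t+1$ as required.
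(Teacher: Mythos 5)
Your proof is correct and follows essentially the same route as the paper: both arguments pivot on Corollary \ref{2.7} to transfer freeness of $J/(b)$ over $R/J$ to the single summand $[I:_RJ]/I$, and both close the loop using the length equality of Lemma \ref{2.6}\,(3). The only cosmetic differences are that you organize the implications as a cycle $(3)\Rightarrow(1)\Rightarrow(2)\Rightarrow(3)$ rather than proving $(2)\Leftrightarrow(3)$ and $(1)\Leftrightarrow(3)$ separately, and that you verify $b\notin\fkm J$ by a direct Nakayama argument on $J/I$ where the paper implicitly invokes the standard fact that $J/(b)\cong(R/J)^{\oplus(\mu_R(J)-1)}$ for an Ulrich ideal; both are sound.
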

\begin{proof}
The implication (3) $\Rightarrow (2)$ is clear, and the reverse implication follows from the equality $\ell_R([I:_RJ]/I)=\ell_R(R/J)$ of Lemma \ref{2.6} (3).

(1) $\Rightarrow$ (3) Suppose that $J\in\mathcal{X}_R$. Then $J/(b)$ is $R/J$-free, so that by Corollary \ref{2.7}, $[I:_RJ]/I$ is a free $R/J$-module, whence $[I:_RJ]/I\cong R/J$ by Lemma \ref{2.6} (3).

(3) $\Rightarrow$ (1) We have $J/(b)\cong ([I:_RJ]/I)^{\oplus t}\cong (R/J)^{\oplus t}$  by Corollary \ref{2.7}, so that by Definition \ref{def1.1}, $J\in\mathcal{X}_R$ with  $\mu_R(J)=t+1$.
\end{proof}

We now come to the main result of this section, which plays a key role in Section 5.

\begin{thm}\label{2.9}
The following assertions hold true.
\begin{enumerate}[{\rm (1)}]
\item Suppose that $J\in\mathcal{X}_R$. Then there exists an element $c \in \m$ such that $I=bcA$. Consequently, $I\in\mathcal{X}_R$ and $\mu_R(I)=\mu_R(J)=t+1$.
\item Suppose that $t \geq 2$. Then $I\in\mathcal{X}_R$ if and only if $J\in\mathcal{X}_R$.
\end{enumerate}
\end{thm}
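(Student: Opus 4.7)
For part (1), the plan is to convert the generator count from Proposition 2.8 into a concrete factorization of the generator of $I$ as an $A$-ideal. Assuming $J \in \mathcal{X}_R$, Proposition 2.8 gives $\mu_R(\mathfrak{c}/I) = 1$ for $\mathfrak{c} := I:_R J$, so I would write $\mathfrak{c} = (c) + I$ with $c \in \mathfrak{m}$ (the hypothesis $I \subsetneq J$ forces $\mathfrak{c} \subsetneq R$). Plugging into Lemma 2.6(4) produces
\[
I = [b\mathfrak{c}]A = bcA + bI,
\]
and since $A$ is module-finite over the local ring $R$, hence semi-local with $\mathfrak{m} \subseteq \mathrm{Jac}(A)$, Nakayama's lemma applied to the finitely generated $A$-module $I$ collapses this to $I = bcA$. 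After adjusting $c$ modulo $I$ by prime avoidance (valid since the $\mathfrak{m}$-primary ideal $I$ escapes every minimal prime of $R$), I may assume $a' := bc$ is a non-zero-divisor, so $I = a'A$ gives $I^2 = a'I$ and $I/(a') \cong A/R \cong (R/I)^t$ is $R/I$-free, verifying the Ulrich conditions. The count $\mu_R(I) = t + 1$ will follow from $a' \notin \mathfrak{m}I$, which I would argue by noting that $a' \in \mathfrak{m}I$ would force $I = a'A \subseteq \mathfrak{m}I \cdot A = \mathfrak{m}I$, contradicting Nakayama.

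For part (2), the direction $J \in \mathcal{X}_R \Rightarrow I \in \mathcal{X}_R$ is part (1); I will focus on the converse. Assuming $I \in \mathcal{X}_R$, the plan is to verify criterion (2) of Proposition 2.8 by computing the $R$-module $\mathfrak{a}/I$ (where $\mathfrak{a} = I/b$) in two independent ways. First, as $A$-modules I expect $\mathfrak{a}/I \cong A/bA$: since $I = aA$ and Lemma 2.6(4) may be rewritten as $\mathfrak{c}A = (a/b)A$, the element $\alpha := a/b$ is a non-zero-divisor in $A$ with $\alpha A = \mathfrak{a}$, and multiplication by $\alpha$ sends $bA$ bijectively onto $aA = I$. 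Second, as $R$-modules I will exhibit an isomorphism $\mathfrak{a}/I = \mathfrak{c}A/I \cong (\mathfrak{c}/I)^{t+1}$ via
\[
(\bar c_0, \bar c_1, \ldots, \bar c_t) \longmapsto c_0 + \sum_{i=1}^t c_i f_i + I,
\]
where injectivity uses the $R/I$-freeness of $A/R$ with basis $\{\bar f_i\}$ (Condition (C)(i)) to force $c_i \in I$ for $1 \le i \le t$, and then $c_0 \in I$, whenever $c_0 + \sum c_i f_i$ lies in $I$.

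Combining the two descriptions will yield $(\mathfrak{c}/I)^{t+1} \cong A/bA$ as $R$-modules, and counting minimal generators gives $(t+1)\mu_R(\mathfrak{c}/I) = \mu_R(A/bA)$. Because $b \in \mathfrak{m}$ forces $bA \subseteq \mathfrak{m}A$, one has $\mu_R(A/bA) = \mu_R(A)$, and I would check $\mu_R(A) = t + 1$ using the short exact sequence $0 \to R/(\mathfrak{m}A \cap R) \to A/\mathfrak{m}A \to A/(R + \mathfrak{m}A) \to 0$ together with the identifications $\mathfrak{m}A \cap R = \mathfrak{m}$ and $A/(R + \mathfrak{m}A) \cong (R/\mathfrak{m})^t$, both direct consequences of Condition (C)(i). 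This forces $\mu_R(\mathfrak{c}/I) = 1$, and Proposition 2.8 then delivers $J \in \mathcal{X}_R$. The main obstacle will be the careful setup of the second isomorphism and the count $\mu_R(A) = t + 1$: both rely on Condition (C)(i) pinning down exactly when an $R$-combination $\sum r_i f_i$ falls into $R$ (respectively into $I$). The Nakayama step in part (1) and the multiplication-by-$\alpha$ step should be relatively routine once the semi-local structure of $A$ and the identity $\mathfrak{a} = \mathfrak{c}A$ are in hand.
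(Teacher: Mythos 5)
Your part (1) follows the paper's own route: Proposition \ref{2.8} yields $I:_RJ=(c)+I$ with $c\in\fkm$, Lemma \ref{2.6} (4) turns this into $I=bcA+bI$, and Nakayama gives $I=bcA$; the Ulrich property and the count $\mu_R(I)=t+1$ then come from $I/(bc)\cong A/R\cong (R/I)^{\oplus t}$, exactly as in the paper. (Your prime-avoidance adjustment of $c$ is superfluous: since $I=bcA$ is $\fkm$-primary and $R$ is Cohen--Macaulay of dimension one, $bc$ is automatically a non-zerodivisor.) Part (2) is where you genuinely diverge, and your argument is correct. The paper compares the inequality $\mu_R(J/(b))=\mu_R([(b)+I]/(b))\le\mu_R(I)=t+1$ with the identity $\mu_R(J/(b))=t\cdot\mu_R([I:_RJ]/I)$ from Corollary \ref{2.7}, obtaining $t\,(\mu_R([I:_RJ]/I)-1)\le 1$, which forces $\mu_R([I:_RJ]/I)=1$ only when $t\ge 2$. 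You instead observe that Lemma \ref{2.6} (4) together with $I=aA$ makes $\displaystyle\frac{I}{b}=[I:_RJ]A=(a/b)A$ a free $A$-module of rank one, whence $\displaystyle\frac{I}{b}\big/I\cong A/bA$, while Condition (C)(i) gives the $R$-module decomposition $[I:_RJ]A/I\cong([I:_RJ]/I)^{\oplus(t+1)}$; comparing minimal numbers of generators with $\mu_R(A/bA)=\mu_R(A)=t+1$ (both identities again resting on the freeness of $A/R$ over $R/I$) produces the exact equality $(t+1)\mu_R([I:_RJ]/I)=t+1$. All the intermediate claims ($\fkm A\cap R=\fkm$, the kernel computation, the non-zerodivisor property of $a/b$) check out. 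It is worth noting that your argument never invokes $t\ge 2$, so it actually establishes the implication $I\in\calX_R\Rightarrow J\in\calX_R$ for every $t\ge 1$, a sharper conclusion than the stated theorem; what the paper's route buys is brevity and direct reuse of Corollary \ref{2.7}, while yours buys the exact generator count and the removal of the hypothesis on $t$ from the \emph{only if} direction.
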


\begin{proof}
(1) Since $J\in\mathcal{X}_R$,  by Proposition \ref{2.8} we get an element $c \in \m$ such that $I:_RJ=(c)+I$.  Therefore,  by Lemma \ref{2.6} (4) we have $$I=[b{\cdot}(I:_RJ)]A=[b{\cdot}((c)+I)]A=bcA+bIA = bcA + bI,$$ whence $I=bcA$ by Nakayama's lemma. Let $a = bc$. Then $I^2=(aA)^2 = a{\cdot}a A = aI$, so that $(a)$ is a reduction of $I$; hence $A = \displaystyle\frac{I}{a}$. Consequently, $I/(a)\cong A/R\cong(R/I)^{\oplus t}$, so that $I\in\mathcal{X}_R$ with $\mu_R(I)=t+1$. Therefore, $\mu_R(I) = \mu_R(J)$, because  $\mu_R(J)=t+1$ by Proposition \ref{2.8}.

(2) We have only to show the {\em only if} part. Suppose that $I\in\mathcal{X}_R$ and choose $a\in I$ so that $I^2=aI$; hence  $A=\displaystyle\frac{I}{a}$. We then have $\mu_R(I)=t+1$, since $I/(a)\cong A/R\cong (R/I)^{\oplus t}$. Consequently, since $J = (b)+I$, we get 
$$\mu_R(J/(b))=\mu_R([(b)+I]/(b)) \leq \mu_R(I)=t+1.$$
 On the other hand, we have $\mu_R(J/(b))=t{\cdot}\mu_R([I:_RJ]/I)$, because $J/(b)\cong([I:_RJ]/I)^{\oplus t}$ by Corollary \ref{2.7}. Hence $$t{\cdot}(\mu_R([I:_RJ]/I)-1)\leq 1,$$ so that $\mu_R([I:_RJ]/I)=1$ because $t \geq 2$. Thus by Proposition \ref{2.8}, $J \in\mathcal{X}_R$ as claimed.
\end{proof}


\section{Chains of Ulrich ideals}
In this section, we study the structure of chains of Ulrich ideals in $R$. First of all, remember that all the Ulrich ideals of $R$ satisfy Condition (C) stated in Definition \ref{2.2} (see Remark \ref{2.3}), and summarizing the arguments in Section 2, we readily get the following.

\begin{thm}\label{3.1}
Let $I, J \in \mathcal{X}_R$ and suppose that $I\subsetneq J$. Choose $b\in J$ so that $J^2=bJ$. Then the following assertions hold true.
\begin{enumerate}[{\rm (1)}]
\item $J=(b)+I$.
\item $\mu_R(J)=\mu_R(I)$.
\item There exists an element $c\in \m$ such that $I=bcA$, so that $(bc)$ is a reduction of $I$, where $A=I:I$.
\end{enumerate}
\end{thm}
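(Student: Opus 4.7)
The plan is to observe that Theorem \ref{3.1} consolidates two results from Section \ref{section2}, Corollary \ref{2.5} and Theorem \ref{2.9}(1), and that almost all the work is verifying that their standing hypotheses are met in the present setting.

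First I would check that the running setup of Section \ref{section2}, introduced just before Lemma \ref{2.6}, applies verbatim to the pair $(I,J)$ with the chosen $b$. By Remark \ref{2.3}, both Ulrich ideals $I$ and $J$ satisfy Condition (C); in particular, for any $a \in I$ with $I^2 = aI$ one has $I=(a):_RI$, and similarly $J = (b):_RJ$, so Proposition \ref{2.1} gives $A = R:I$ and $B := J:J = R:J$. Together with the hypothesis $I \subsetneq J$, these are exactly the conditions required to invoke Corollary \ref{2.5} and Theorem \ref{2.9}.

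Assertion (1) is then immediate from Corollary \ref{2.5}: since $I \subseteq J$, $J^2 = bJ$ and $J = (b):_RJ$, we get $J = (b) + I$.

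For assertions (2) and (3) I would invoke Theorem \ref{2.9}(1) with $t$ the positive integer determined by $A/R \cong (R/I)^{\oplus t}$ (equivalently $t = \mu_R(I) - 1$, as recorded in Remark \ref{2.3}). Under the hypotheses just verified, that theorem produces an element $c \in \m$ with $I = bcA$, which is (3); and it gives $\mu_R(I) = \mu_R(J) = t+1$, which is (2). The remaining claim in (3), that $(bc)$ is a reduction of $I$, follows at once from $I = bcA$, since
$$I^2 = (bcA)(bcA) = bc \cdot bcA = (bc)I.$$
Because every step is a direct application of a result already proved in Section \ref{section2}, there is no substantive obstacle; the only thing requiring care is the alignment of the notation of the preceding section with that of the theorem, a match which is essentially literal.
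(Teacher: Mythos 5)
Your proposal is correct and follows exactly the route the paper intends: the paper proves Theorem \ref{3.1} simply by noting (via Remark \ref{2.3}) that Ulrich ideals satisfy Condition (C) and then "summarizing the arguments in Section 2," which amounts precisely to your applications of Corollary \ref{2.5} for (1) and Theorem \ref{2.9}(1) for (2) and (3). Your explicit verification of the standing hypotheses and of the reduction property $I^2=(bc)I$ is a faithful, slightly more detailed rendering of the same argument.
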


We begin with the following, which shows that Ulrich ideals behave well, if $R$ possesses minimal multiplicity. We shall discuss this phenomenon more closely in Section 4.

\begin{cor}\label{3.2}
Suppose that $\rmv(R)=\rme(R) > 1$ and let $I \in \calX_R$. Then $\mu_R(I)=\rmv(R)$ and $R/I$ is a Gorenstein ring.   
\end{cor}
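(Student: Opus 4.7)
The plan is to reduce to the case $I\subsetneq \fkm$ and then exploit Theorem 3.1 together with the multiplicativity of the Cohen--Macaulay type recorded in the introductory theorem on Ulrich ideals.

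First I would use the hypothesis $\rmv(R) = \rme(R) > 1$: as noted just after Definition \ref{def1.1}, this is exactly the condition that $R$ is a non-regular Cohen--Macaulay local ring of minimal multiplicity, so $\fkm \in \calX_R$. In particular $\fkm$ is itself an Ulrich ideal with $\mu_R(\fkm) = \rmv(R) = \rme(R)$. Applying the formula $\rmr(R) = (\mu_R(\fkm) - 1)\cdot \rmr(R/\fkm)$ from the theorem on Ulrich ideals (with $d = 1$), and using $\rmr(R/\fkm) = 1$, I get $\rmr(R) = \rme(R) - 1$.

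Next, given any $I \in \calX_R$, split into two cases. If $I = \fkm$, then $\mu_R(I) = \rmv(R)$ trivially, and $R/I$ is a field, hence Gorenstein. If $I \subsetneq \fkm$, apply Theorem \ref{3.1}(2) to the pair $I \subsetneq J := \fkm$ to obtain
\[
\mu_R(I) = \mu_R(\fkm) = \rmv(R).
\]
Now invoke the type formula again, this time for the Ulrich ideal $I$: with $t = \mu_R(I) - 1 = \rme(R) - 1$,
\[
\rme(R) - 1 = \rmr(R) = t\cdot \rmr(R/I) = (\rme(R) - 1)\cdot \rmr(R/I).
\]
Since $\rme(R) - 1 > 0$, this forces $\rmr(R/I) = 1$, and because $R/I$ is an Artinian local ring, this means $R/I$ is Gorenstein.

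The argument is short and the only real input is Theorem \ref{3.1}(2), which transports the generator count from $\fkm$ down to $I$; the rest is a bookkeeping computation with Cohen--Macaulay types. I do not foresee any genuine obstacle. The one point worth being careful about is ensuring that the type formula from the introductory theorem applies in both the case $I = \fkm$ and the case $I \subsetneq \fkm$, but in both situations all the hypotheses (Cohen--Macaulay base, Ulrich ideal) are satisfied by construction.
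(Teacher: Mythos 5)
Your proof is correct and follows essentially the same route as the paper: establish $\fkm \in \calX_R$ and $\rmr(R)=\rmv(R)-1$, transport the generator count via Theorem \ref{3.1}(2), and conclude Gorensteinness of $R/I$ from the type formula $\rmr(R)=[\mu_R(I)-1]\cdot\rmr(R/I)$. Your explicit handling of the case $I=\fkm$ (where Theorem \ref{3.1}(2) does not literally apply since it requires a strict inclusion) is a small point of care that the paper leaves implicit.
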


\begin{proof}
We have $\fkm \in \calX_R$ and $\rmr(R) = \rmv(R)-1$, because $\rmv(R)=\rme(R) > 1$. Hence by Theorem \ref{3.1} (2), $\mu_R(I) = \mu_R(\m)= \rmv(R)$. The second assertion follows from the equality $\rmr(R)=[\mu_R(I)-1]{\cdot}\rmr(R/I)$ (see \cite[Theorem 2.5]{GTT2}).
\end{proof}

For each $I \in \calX_R$, Assertion (3) in Theorem \ref{3.1} characterizes those ideals $J \in \calX_R$ such that $I \subsetneq J$. Namely, we have the following. 

\begin{cor}\label{3.6}
Let $I\in \mathcal{X}_R$. Then
$$ \{ J\in \mathcal{X}_R \ |\ I\subsetneq J \} =\left\{(b)+I \ |\ b \in \m~\text{such~that}~(bc)~\text{is~a~reduction~of}~I~\text{for~some}~c \in \fkm \right\}.$$
\end{cor}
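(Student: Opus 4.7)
The plan is to treat the two inclusions separately. The inclusion $\subseteq$ is essentially a restatement of Theorem~\ref{3.1}: given $J \in \mathcal{X}_R$ with $I \subsetneq J$, I would choose $b \in J$ so that $J^2 = bJ$; then $b \in \fkm$ automatically, Theorem~\ref{3.1}(1) gives $J = (b) + I$, and Theorem~\ref{3.1}(3) produces $c \in \fkm$ with $I = bcA$, so that $(bc)$ is a reduction of $I$.

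For the reverse inclusion $\supseteq$, let $b, c \in \fkm$ with $(bc)$ a reduction of $I$, and set $J = (b) + I$. Since $I$ is Ulrich, every principal reduction $(a')$ of $I$ satisfies $I = a'A$ (because $I = aA$ for the reduction used in Remark~\ref{2.3}, and any two such generators differ by a unit of $A$), so $I = bcA$ and $I^2 = bcI$. I would first verify $I \subsetneq J$, i.e.\ $b \notin I$: if $b \in bcA$, then cancellation of the nonzerodivisor $b$ in $A$ would force $c \in A^\times$, contradicting $c \in \fkm \subseteq \mathrm{Jac}(A)$ (the latter because every maximal ideal of the module-finite extension $A \supseteq R$ lies over $\fkm$). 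Next, I would check that $J$ satisfies the standing hypothesis of Section~\ref{section2}: $J^2 = bJ$ is immediate from $I^2 = bcI \subseteq bI$, and $J = (b):_R J$ follows by a short argument that uses Proposition~\ref{2.1} to identify $R:A = I$, together with Lemma~\ref{2.4}.

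With these in hand, I would invoke the machinery of Section~\ref{section2}: Lemma~\ref{2.6}(2) gives $I :_R J = (I/b) \cap R = cA \cap R$, and Lemma~\ref{2.4} applied with $a = c$, together with the trivial reverse inclusion, yields $cA \cap R = (c) + I$. Hence $(I :_R J)/I$ is cyclic, generated by the image of $c$, and nonzero (by the same cancellation argument that ruled out $b \in I$, now ruling out $c \in I$). Proposition~\ref{2.8} then delivers $J \in \mathcal{X}_R$, completing the proof. The main obstacle is recognizing that the hypothesis $c \in \fkm$ is not cosmetic: it is precisely what prevents the degenerate situation $c \in A^\times$, in which $J = (b) + I = bA = I$ would force the claimed set equality to collapse. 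Once the inclusion $\fkm \subseteq \mathrm{Jac}(A)$ is isolated, the remaining verifications are routine applications of the Section~\ref{section2} lemmas.
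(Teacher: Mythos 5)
Your proposal is correct. The forward inclusion is, as you say, just Theorem \ref{3.1}, and that is also how the paper disposes of it. For the substantive inclusion $\supseteq$, however, you take a genuinely different route. The paper verifies directly that $J=(b)+I$ is Ulrich: after checking $I\subsetneq J$ (via $bc\notin\fkm I$), $(b)\subsetneq J$ and $J^2=bJ$, it writes $J=(b)+\sum_{i=1}^tR{\cdot}(bc)f_i$ using the free basis $\{\overline{f_i}\}$ of $A/R$ over $R/I$ and shows by an explicit relation-chasing argument, resting on Lemma \ref{2.4}, that $J/(b)\cong (R/J)^{\oplus t}$. You instead feed the pair $(I,J)$ into the Section \ref{section2} machinery: you verify the standing hypotheses $J^2=bJ$ and $J=(b):_RJ$ (your sketch of the latter is sound: from $xI\subseteq (b)$ and $I=bcA$ one gets $xc\in R:A=I$, hence $x\in bA\cap R\subseteq (b)+I$ by Lemma \ref{2.4}, and the reverse inclusion follows from $J^2=bJ$), then compute $I:_RJ=cA\cap R=(c)+I$ and invoke Proposition \ref{2.8}. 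Both arguments hinge on Lemma \ref{2.4}. The paper's is more self-contained and avoids the extra verification of $J=(b):_RJ$; yours is more modular, uses Proposition \ref{2.8} as the membership criterion it was built to be, automatically yields $J\ne(b)$, and produces as a by-product the identity $I:_RJ=(c)+I$ that the paper itself relies on later (in the proofs of Theorems \ref{2.9} and \ref{3.7}). Your preliminary observation that every principal reduction $(a')$ of an Ulrich ideal satisfies $I=a'A$, because two such generators differ by a unit of $A$, is correct and is used implicitly by the paper as well, and your remark on the role of $c\in\fkm\subseteq\rmJ(A)$ correctly isolates why the hypothesis $c\in\fkm$ is essential.
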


\begin{proof}
Let $b, c \in \m$ and suppose that $(bc)$ is a reduction of $I$. We set $J = (b) +I$. We shall show that $J \in \calX_R$ and $I \subsetneq J$. Because $bc \not\in \m I$, we have $b, c \notin I$, whence $I \subsetneq J$. If $J=(b)$, we then have $I=bcA \subseteq J = (b)$ where $A=I:I$, so that  $cA \subseteq R$. This is impossible, because $c \not\in R:A=I$ (see Lemma \ref{2.1}). Hence, $(b)\subsetneq J$. Because $I^2=bcI$, we have $J^2=bJ+I^2=bJ+bcI=bJ$. Let us check that $J/(b)$ is a free $R/J$-module. Let $\{f_i\}_{1 \le i \le t}$~($t = \mu_R(I) -1> 0$) be elements of $A$ such that $A= R + \sum_{i=1}^tRf_i$, so that their images $\{\overline{f_i}\}_{1 \le i \le t}$ in $A/R$ form a free basis of the $R/I$-module $A/R$ (remember that $I$ satisfies Condition (C) of Definition \ref{2.2}). We then have $$J=(b)+I=(b)+bcA=(b) + \sum_{i=1}^tR{\cdot}(bc)f_i.$$  Let $\{c_i\}_{1 \le i \le t}$ be elements of $R$ and assume that $\sum_{i=1}^t c_i{\cdot}(bcf_i) \in (b)$. Then, since $\sum_{i=1}^tc_ic{\cdot}f_i \in R$, we have $c_ic \in I=bcA$, so that $c_i\in bA \cap R$ for all $1\leq i\leq t$. Therefore, because $bA\cap R\subseteq (b)+I=J$ by Lemma \ref{2.4}, we get $c_i \in J$, whence $J/(b)\cong(R/J)^{\oplus t}$. Thus, $J = (b)+I \in \calX_R$. 
\end{proof}

The equality $\mu_R(I) = \mu_R(J)$ does not hold true in general, if $I$ and $J$ are incomparable, as we show in the following.

\begin{ex}
Let $S=k[[X_1, X_2, X_3, X_4]]$ be the formal power series ring over a field $k$ and consider the  matrix ${\Bbb M}=
\left(\begin{smallmatrix}
X_1 & X_2 & X_3 \\
X_2 & X_3 & X_1 
\end{smallmatrix}\right)
$.
We set $R=S/[\fka + (X_4^2)]$, where $\fka$ denotes the ideal of $S$ generated by the $2 \times 2$ minors of $\bbM$. Let $x_i$ denote the image of $X_i$ in $R$ for each $i=1,2,3, 4$. Then, $(x_1, x_2, x_3)$ and $(x_1, x_4)$ are Ulrich ideals of $R$ with different numbers of generators, and  they are incomparable with respect to inclusion.
\end{ex}

We are now ready to prove Theorem \ref{1.1}.

\begin{proof}[Proof of Theorem \ref{1.1}]

(1) This is a direct consequence of Corollary \ref{3.6}.

(2) By Theorem \ref{3.1}, we may assume that $n >2$ and that our assertion holds true for $n-1$.  Therefore, there exist elements $a_1, a_2, \ldots, a_{n-1} \in \m$ such that $(a_1a_2\cdots a_{n-1})$ is a reduction of $I_{n-1}$ and $I_i=(a_1a_2\cdots a_i)+I_{n-1}$ for all $1\leq i\leq n-2$. Now apply Theorem \ref{3.1} to the chain $I_n \subsetneq I_{n-1}$. We then have $I_{n-1}=(a_1a_2\cdots a_{n-1})+I_n$ together with one more element $a_n\in\m$ so that $(a_1a_2\cdots a_{n-1}){\cdot}a_nA=I_n$. Hence $$I_i=(a_1a_2\cdots a_i)+I_{n-1}=(a_1a_2\cdots a_i)+I_n$$ for all $1\leq i\leq n-1$.  
\end{proof}

In order to prove Theorem \ref{1.2}, we need more preliminaries. Let us begin with the following.

\begin{thm}\label{3.7}
Suppose that $I, J\in \mathcal{X}_R$ and $I\subsetneq J$. Let $b\in J$ such that $J^2=bJ$ and $B = J:J$. Then the following assertions hold true.
\begin{enumerate}[{\rm (1)}]
\item $B=R+\displaystyle\frac{I}{b}$ and $\displaystyle\frac{I}{b}=I:J$.
\item $B$ is a Cohen-Macaulay local ring with $\dim B=1$ and $\n=\m+\displaystyle\frac{I}{b}$ the maximal ideal. Hence $R/\m \cong B/\n$.
\item $\displaystyle\frac{I}{b}\in\mathcal{X}_B$ and $\mu_B(\displaystyle\frac{I}{b})=\mu_R(I)$.
\item $\rmr(B)=\rmr(R)$ and $\rme(B)=\rme(R)$. Therefore, $\rmv(B)=\rme(B)$ if and only if $\rmv(R)=\rme(R)$.
\end{enumerate}
\end{thm}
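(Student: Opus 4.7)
Parts (1) and (2) follow by combining the machinery of Section~2 applied to $J\in\mathcal{X}_R$. Proposition~\ref{2.1} yields $B=R:J=J/b$, and Corollary~\ref{2.5} gives $J=(b)+I$, whence $B=R+I/b$. The identity $I/b=I:J$ in $Q(R)$ follows directly: for $y\in I$, $(y/b)J=y(J/b)=yB\subseteq IA=I$, and conversely $x\in I:J$ forces $xb\in I$. As a module-finite birational extension $B\subseteq A\subseteq\overline{R}$, $B$ is a one-dimensional Cohen-Macaulay ring. For locality, $B/\n\cong R/(\m+R\cap I/b)=R/(\m+(I:J))=R/\m$ (since $I:J\subseteq\m$), so $\n$ is maximal with residue field $R/\m$. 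Writing $I=bcA$ via Theorem~\ref{2.9}(1), one has $(I/b)^2=c(I/b)\subseteq\m B$, and combined with $\m(I/b),\m^2\subseteq\m B$ this gives $\n^2\subseteq\m B$, hence $\sqrt{\m B}=\n$. Since every maximal ideal of the finite extension $B$ contracts to $\m$ and therefore contains $\m B$, it must equal $\n$.

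For part (3), with $I/b=cA$, the element $c$ is a non-zero-divisor on $B$ (being a factor of the parameter $bc$ of $R$), $(I/b)^2=c(I/b)$, and $cB\subsetneq cA=I/b$ strictly because $A\supsetneq B$ (Lemma~\ref{2.6}(1) gives $A/B\cong (B/(I/b))^t$ with $t\geq 1$). The $B$-linear map $A/B\to(I/b)/cB$, $\bar{x}\mapsto\overline{cx}$, is well-defined and bijective (injectivity uses that $c$ is NZD), so $(I/b)/cB\cong A/B\cong (B/(I/b))^t$ is a free $B/(I/b)$-module of rank $t$. Therefore $I/b\in\mathcal{X}_B$ with $\mu_B(I/b)=t+1=\mu_R(I)$.

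For part (4), the multiplicity equality comes from choosing $x\in\m$ a reduction of $\m$ (via the standard reduction to infinite residue field). Since $\n^2\subseteq\m B$ by part (2), $x$ is also a reduction of $\n$ in $B$, so $\rme(R)=\ell_R(R/xR)$ and $\rme(B)=\ell_B(B/xB)=\ell_R(B/xB)$. The snake lemma applied to $\cdot x$ on $0\to R\to B\to B/R\to 0$ yields $\ell_R(R/xR)=\ell_R(B/xB)$, because $\cdot x\colon B/R\to B/R$ is an endomorphism of a finite-length module (so kernel and cokernel have equal length). For the type equality, use the formulas $\rmr(R)=t\cdot\rmr(R/I)$ and $\rmr(B)=t\cdot\rmr(B/(I/b))$ from the theorem on Ulrich ideals, reducing to $\rmr(R/I)=\rmr(B/(I/b))$. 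Since $B/(I/b)\cong R/(I:J)\cong J/I$ (the second iso being multiplication by $\bar b$ in $R/I$), the key step is the socle inclusion $\soc(R/I)\subseteq J/I$: any $\bar x\in\soc(R/I)$ satisfies $\bar c\bar x=0$ (as $c\in\m$), so $\bar x\in\ann_{R/I}(\bar c)=(I:c)/I=J/I$ (the last equality coming from $I:c=bA\cap R=J$ via Lemma~\ref{2.4}). Consequently the map $\soc(R/I)\to\soc(R/J)$ induced by $R/I\twoheadrightarrow R/J$ vanishes, and the long exact $\Hom(k,-)$ sequence for $0\to J/I\to R/I\to R/J\to 0$ yields $\soc(J/I)\cong\soc(R/I)$, giving $\rmr(J/I)=\rmr(R/I)$ and hence $\rmr(B)=\rmr(R)$. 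Finally, given $\rme(R)=\rme(B)$, the equivalence $\rmv(B)=\rme(B)\Leftrightarrow\rmv(R)=\rme(R)$ reduces to showing $\mu_B(\n)=\mu_R(\m)$; I expect this to follow from the identity $\n^2=\m\n$ (itself a consequence of $(I/b)^2=c(I/b)\subseteq\m(I/b)$) via a careful comparison of generators in $\n/\m\n$ using $\m\cap(I/b)=I:J=(c)+I$, and this last generator count is the most delicate step of the proof.
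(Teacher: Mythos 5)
Your arguments for assertions (1)--(3) are sound and essentially coincide with the paper's (the identification $\fka/cB\cong A/B\cong (B/\fka)^{\oplus t}$ via Lemma \ref{2.6}\,(1) is exactly the paper's step for (3)), and your socle computation $\soc(R/I)\subseteq J/I$ gives a correct, if longer, alternative route to $\rmr(B)=\rmr(R)$: the paper instead applies the formula $\rmr(R)=t{\cdot}\rmr(R/L)$ to the Ulrich ideal $L=(c)+I=I:_RJ$, for which $B/\fka\cong R/L$ is immediate from Lemma \ref{2.6}\,(2), so no socle analysis is needed. One small repair is needed in (4): from $\n^2\subseteq\m B$ alone you cannot conclude that $\m B$ (hence $xB$) is a reduction of $\n$ --- in a DVR $(B,\n=(s))$ the ideal $(s^2)$ contains $\n^2$ but is not a reduction of $\n$. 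What you actually need, and what your own computation already yields since $c\cdot(I/b)=c\cdot cA\subseteq\m\cdot(I/b)$, is the equality $\n^2=(\m B)\n$; alternatively the paper argues $\n A=\m A=(\m B)A$. This is easily fixed.

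The genuine gap is the final equivalence $\rmv(B)=\rme(B)\Leftrightarrow\rmv(R)=\rme(R)$. You reduce it to the claim $\mu_B(\n)=\mu_R(\m)$, which you do not prove and explicitly flag as ``the most delicate step''; as it stands this part of the statement is unestablished, and it is not even clear that the embedding dimension is preserved in general (the theorem does not assert it). The paper sidesteps the issue entirely: for a one-dimensional Cohen--Macaulay local ring with $\rme>1$, having minimal multiplicity is equivalent to $\rmr=\rme-1$, so the desired equivalence follows at once from the two equalities $\rmr(B)=\rmr(R)$ and $\rme(B)=\rme(R)$ that you have already (essentially) established. Replacing your last step by this observation closes the gap.
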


\begin{proof} We set $A = I:I$. Hence $R \subsetneq B \subsetneq A$ by Proposition \ref{2.1}.  Let $t = \mu_R(I) - 1$.

(1) Because $J=(b)+I$ and $B = \displaystyle\frac{J}{b}$, we get $B=R+\displaystyle\frac{I}{b}$. We have $I:J \subseteq \displaystyle\frac{I}{b}$, since  $b \in J$. Therefore, $\displaystyle\frac{I}{b}=I:J$, because $$J\cdot \displaystyle\frac{I}{b}=I\cdot \displaystyle\frac{J}{b}=IB\subseteq IA=I.$$

(2) It suffices to show that $B$ is a local ring with maximal ideal $\n=\m + \displaystyle\displaystyle\frac{I}{b}$. Let $\fka = \displaystyle\frac{I}{b}$. Choose $c \in \m$ so that $I=bcA$. We then have $\fka =cA \subseteq \fkm A \subseteq \rmJ(A)$, where $\rmJ(A)$ denotes the Jacobson radical of $A$. Therefore, $\n = \m + cA$ is an ideal of $B=R+cA$, and $\fkn \subseteq \rmJ(B)$, because $A$ is a finite extension of $B$. On the other hand, because $R/\m \cong B/\n$, $\fkn$ is a maximal ideal of $B$, so that $(B,\fkn)$ is a local ring.

(3) We have $\fka^2=c\fka$, since $\fka = cA$. Notice that $\fka \ne cB$, since $A \ne B$. Then, because $\fka/cB\cong A/B\cong(B/\fka)^{\oplus t}$ by Lemma \ref{2.6} (1), we get $\fka\in\mathcal{X}_B$ and $\mu_B(\fka)=t+1=\mu_R(I)$.

(4) We set $L=(c)+I$. Then, since $bcA=I$, $L\in\mathcal{X}_R$ and $\mu_R(L)=\mu_R(I)=t+1$ by Corollary \ref{3.6} and Theorem \ref{3.1} (2). Therefore, $\rmr(R)=t{\cdot} \rmr(R/L)$ by \cite[Theorem 2.5]{GTT2}, while $\rmr(B)=t{\cdot}\rmr(B/\fka)$ for the same reason, because $\fka \in \calX_B$  by Assertion (3). Remember that the element $c$ is chosen so that $I:_RJ = (c) +I$ (see the proof of Theorem \ref{2.9} (1)). We then have $\rmr(B/\fka)=\rmr(R/[I:_RJ])$, because $B = R + \fka$ and $$R/L=R/[\fka \cap R] \cong B/\fka$$ where the first equality follows from Lemma \ref{2.6} (2). Thus $$\rmr(B)=t{\cdot}\rmr(B/\fka)=t{\cdot}\rmr(R/L)=\rmr(R),$$ as is claimed. To see the equality $\rme(B) = \rme(R)$, enlarging the residue class field of $R$, we may assume that $R/\m$ is infinite. Choose an element $\alpha\in\m$ so that $(\alpha)$ is a reduction of $\m$. Hence $\alpha B$ is a reduction of $\m B$, while  $\m B$ is a reduction of $\n$, because $$\n A=(\m + cA)A=\m A = (\m B)A.$$ Therefore, $\alpha B$ is a reduction of $\n$, so that$$\rme(B)=\ell_B(B/\alpha B)=\ell_R(B/\alpha B)=\rme_{\alpha R}^{0}(B)=\rme_{\alpha R}^{0}(R)=\rme(R),$$ where the second equality follows from the fact that $R/\m \cong B/\n$ and the fourth equality follows from the fact that $\ell_R(B/R)<\infty$. Hence $\rme(B) = \rme(R)$ and $\rmr(B) = \rmr(R)$. Because $\rmv(R)=\rme(R)>1$ if and only if $\rmr(R)=\rme(R)-1$, the assertion that $\rmv(B)=\rme(B)$ if and only if $\rmv(R)=\rme(R)$ now follows. 
\end{proof}

We need one more lemma.

\begin{lemma}\label{3.5}
Suppose that $I, J\in \mathcal{X}_R$ and $I\subsetneq J$. Let  $\alpha\in J$. Then $J=(\alpha)+I$ if and only if $J^2=\alpha J.$
\end{lemma}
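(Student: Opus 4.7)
The plan is to reduce both implications to a single unit criterion in the birational extension $B := J:J$. By Theorem \ref{3.7}, once we fix any $b \in J$ with $J^2 = bJ$, the ring $B$ is local with maximal ideal $\n = \m + \frac{I}{b}$, and $J = bB$.

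The ``if'' direction is essentially immediate: if $J^2 = \alpha J$, then $\alpha$ itself satisfies the hypothesis of the distinguished element in Theorem \ref{3.1}, so Theorem \ref{3.1} (1), applied with $b$ replaced by $\alpha$, yields $J = (\alpha) + I$.

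For the ``only if'' direction, I would fix $b \in J$ with $J^2 = bJ$ and note that, since $\alpha \in J = bB$, we may write $\alpha = b\beta$ for a unique $\beta = \alpha/b \in B$. Then $J^2 = b^2 B$ while $\alpha J = b^2 \beta B$, so proving $J^2 = \alpha J$ becomes equivalent (using that $b$ is a non-zerodivisor in $B$) to $\beta B = B$, i.e., to $\beta$ being a unit in the local ring $B$. To establish this, I argue by contradiction: suppose $\beta \in \n = \m + \frac{I}{b}$, and write $\beta = m + i/b$ with $m \in \m$ and $i \in I$. Then $\alpha = mb + i \in \m b + I$, so $J = (\alpha) + I \subseteq \m b + I$. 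Since $b \in J$, this forces $b \in \m b + I$, and Nakayama's lemma then gives $b \in I$, contradicting the strict inclusion $I \subsetneq J = (b) + I$ (which guarantees $b \notin I$).

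The crux of the argument---indeed, the only non-formal step---is recognizing to pass to $B$ and to translate the equality $J^2 = \alpha J$ into the unit condition $\alpha/b \in B^\times$; once that reformulation is in place, the conclusion is a one-line Nakayama argument. I do not anticipate any further obstacle.
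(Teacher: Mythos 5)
Your proof is correct, and it takes a recognizably different route from the paper's, though both are ultimately Nakayama arguments in a birational overring. The paper proves the nontrivial direction inside $A=I:I$: using the element $c\in\fkm$ with $I=bcA$ supplied by Theorem \ref{3.1} (3), it computes $bA=JA=\alpha A+bcA$, applies Nakayama's lemma to get $bA=\alpha A$, and concludes that $(\alpha)$ is a reduction of $J$, whence $J^2=\alpha J$ (this last passage is left implicit and itself needs a short cancellation using $J^2=bJ$). You instead work inside $B=J:J$, where $J=bB$, and translate $J^2=\alpha J$ into the unit condition $\beta=\alpha/b\in B^{\times}$; you then exclude $\beta\in\fkn=\fkm+\frac{I}{b}$ (the description of the maximal ideal from Theorem \ref{3.7} (2)) by pulling the relation back to $R$ and applying Nakayama to $J/I$, using $b\notin I$. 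Your version is more direct in that it needs neither the auxiliary element $c$ nor the reduction-number step, at the cost of invoking the local structure of $B$; there is no circularity, since Theorem \ref{3.7} precedes Lemma \ref{3.5} and does not depend on it. The only point worth making explicit is that the identities $J^2=b^2B$ and $\alpha J=b^2\beta B$ rest on $J=bB$, i.e., on $J=(b):_RJ$ for the Ulrich ideal $J$ (Remark \ref{2.3} and Proposition \ref{2.1}), and that cancelling $b^2$ is legitimate because $b$ is a nonzerodivisor.
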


\begin{proof} It suffices to show the {\em only if} part. Suppose $J =(\alpha)+I$. We set $A = I:I$, $B=J:J$, and choose $b\in J$ so that $J^2=bJ$. Then $J=bB$ and $B \subseteq A$, whence $JA=bA$, while $JA=[(\alpha)+I]A=\alpha A+I$. We now choose $c\in\m$ so that $I=bcA$ (see Theorem \ref{3.1} (3)). We then have $bA=JA=\alpha A+bcA$, whence $bA=\alpha A$ by Nakayama's lemma. Therefore, $JA=\alpha A$, whence $(\alpha)$ is a reduction of $J$, so that $J^2=\alpha J$. 
\end{proof}

We are now ready to prove Theorem \ref{1.2}.

\begin{proof}[Proof of Theorem \ref{1.2}] 
Let $I, J \in \calX_R$ such that $I \subsetneq J$. We set $A = I:I$ and $B = J:J$. Let $b \in J$ such that $J = (b) + I$. Then $J^2 = bJ$ by Lemma \ref{3.5} and $B$ is a local ring with $\fkn = \fkm + \displaystyle\frac{I}{b}$ the maximal ideal by Theorem \ref{3.7}.

Let $\fka \in \calX_R$ such that $I \subseteq \fka \subsetneq J$. First of all, let us check the following.

\begin{claim}\label{claim1}
$\displaystyle\frac{\fka}{b} \in \calX_B$ and $\displaystyle\frac{\fka}{b} = \fka :J$.
\end{claim}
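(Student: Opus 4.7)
The plan is to deduce the claim as a direct application of Theorem \ref{3.7} to the pair $(\fka, J)$ in place of $(I, J)$, using the same element $b \in J$ and the same ring $B = J:J$. The key observation is that all the hypotheses of Theorem \ref{3.7} transfer from $(I,J,b)$ to $(\fka, J, b)$ without further work, because $B = J:J$ depends only on $J$ (not on $I$), and the element $b$ chosen for $I \subsetneq J$ serves equally well for $\fka \subsetneq J$.

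First, I would verify the hypotheses. We have $\fka, J \in \calX_R$ with $\fka \subsetneq J$ by assumption, and the ring $B = J:J$ is the same as in the ambient setup. It remains to check that $J^2 = bJ$ and $J = (b) + \fka$. The first equality $J^2 = bJ$ is already established (it follows from $J = (b) + I$ via Lemma \ref{3.5} applied to the pair $(I, J)$). For the second equality, the chain $I \subseteq \fka \subsetneq J$ yields
\[
J = (b) + I \subseteq (b) + \fka \subseteq J,
\]
so $J = (b) + \fka$ (equivalently, one invokes Lemma \ref{3.5} for the pair $(\fka, J)$ together with $J^2 = bJ$).

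Second, I would invoke Theorem \ref{3.7} with $\fka$ taking the role of $I$. Part (1) of that theorem gives $\frac{\fka}{b} = \fka : J$, and part (3) gives $\frac{\fka}{b} \in \calX_B$. Together these are exactly the two assertions of Claim \ref{claim1}.

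There is no real obstacle; the content is that the hypotheses of Theorem \ref{3.7} are inherited by any intermediate Ulrich ideal $\fka$ with $I \subseteq \fka \subsetneq J$ using the very same $b$, so the claim is immediate once this uniformity of $b$ is pointed out.
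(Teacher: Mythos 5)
Your proof is correct, and it takes a genuinely different (and shorter) route than the paper's. You observe that the hypotheses of Theorem \ref{3.7} --- two Ulrich ideals with a strict inclusion, an element $b \in J$ with $J^2 = bJ$, and $B = J:J$ --- involve the smaller ideal only through its being an Ulrich ideal strictly contained in $J$, so the theorem applies verbatim to the pair $(\fka, J)$ with the very same $b$ and $B$; parts (1) and (3) then yield both assertions of the claim at once. (Your check that $J = (b) + \fka$ is harmless but not even required as a hypothesis: Theorem \ref{3.7} re-derives this internally via Corollary \ref{2.5}.) The paper instead re-proves the two assertions by hand: it obtains $\frac{\fka}{b} = \fka : J$ from the containment $B = R:J \subseteq R:\fka = \fka:\fka$ (Proposition \ref{2.1}), and for $\frac{\fka}{b} \in \calX_B$ it reduces to the case $I \subsetneq \fka$, invokes Theorem \ref{1.1}(2) to write $I = ba_1a_2A$ and $\fka = (ba_1) + I$, and then applies Corollary \ref{3.6} inside the ring $B$. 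Your approach buys economy and makes clear that the claim is a formal instance of Theorem \ref{3.7}; the paper's buys the explicit presentation $\frac{\fka}{b} = a_1B + \frac{I}{b}$ tying $\frac{\fka}{b}$ to the fixed Ulrich ideal $\frac{I}{b}$ of $B$, in the spirit of the correspondence being constructed. There is no circularity in citing Theorem \ref{3.7}, since it is established before the proof of Theorem \ref{1.2} and does not depend on it.
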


\begin{proof}[Proof of Claim \ref{claim1}] Since $b\in J$, $\fka:J \subseteq \displaystyle\frac{\fka}{b}$. On the other hand, since $$B=R:J \subseteq R:\fka=\fka:\fka$$ by Lemma \ref{2.1}, we get
$$J\cdot \frac{\fka}{b}=\fka\cdot \frac{J}{b}=\fka B\subseteq \fka {\cdot}(\fka:\fka)=\fka,$$ so that $\displaystyle\frac{\fka}{b}$ is an ideal of $B = \displaystyle\frac{J}{b}$ and $\fka:J=\displaystyle\frac{\fka}{b}$. Since $\displaystyle\frac{I}{b}\in\mathcal{X}_B$ by Theorem \ref{3.7} (3), to show that $\displaystyle\frac{\fka}{b} \in \calX_B$, we may assume $I \subsetneq \fka$. We then have,  by   Theorem \ref{1.1} (2), elements  $a_1, a_2\in\m$ such that $I =ba_1a_2A$ and $\fka = (ba_1)+I$; hence $\displaystyle\frac{\fka}{b}=a_1R+\displaystyle\frac{I}{b}$. We get $\displaystyle\frac{\fka}{b}=a_1B+\displaystyle\frac{I}{b}$, since $\displaystyle\frac{\fka}{b}$ is an ideal of $B$. Therefore, $\displaystyle\frac{\fka}{b}\in\mathcal{X}_B$ by Corollary \ref{3.6}, because $a_1a_2B$ is a reduction of $\displaystyle\frac{I}{b}=a_1a_2A$.
\end{proof}

We now have the correspondence $\varphi$ defined by $\fka \mapsto \displaystyle\frac{\fka}{b}$, and it is certainly injective. Suppose that $\fkb\in\mathcal{X}_B$ and $\displaystyle\frac{I}{b}\subsetneq\fkb$. We take $\alpha\in\fkb$ so that $\fkb^2=\alpha \fkb$. Then, since $B$ is a Cohen-Macaulay local ring with maximal ideal $\fkm + \displaystyle\frac{I}{b}$, we have $\fkb=\alpha B+\displaystyle\frac{I}{b}$ by Theorem \ref{3.1}. Let us write $\alpha=a+x$ with $a\in\m$ and $x\in\displaystyle\frac{I}{b}$. We then have $\fkb=aB+\displaystyle\frac{I}{b}$, so that $\fkb^2=a\fkb$ by Lemma \ref{3.5}. Set $L=\displaystyle\frac{I}{b}$. Then, since $A=I:I=L:L$, by Theorem \ref{3.1} we have an element $\beta\in\n=\m+L$ such that $L=a\beta A$; hence $a\beta \in L$. Let us write $\beta=c+y$ with $c\in\m$ and $y\in L$. We then have $ac =a\beta-ay \in L$ and $yA \subseteq L$, so that because $$L=a\beta A\subseteq acA+a{\cdot}yA\subseteq acA+\m L,$$ we get $L=acA$ by Nakayama's lemma. Therefore, $I=abcA$. On the other hand, since $aB=aR+a{\cdot}\displaystyle\frac{I}{b}$, we get $\fkb=aB + \displaystyle\frac{I}{b} = aR+\displaystyle\frac{I}{b}$. Hence, because $b\fkb=(ab)+I$ and $I=(ab)cA$, we finally have that $b \fkb \in \mathcal{X}_R$ and $$I=abcA \subsetneq b\fkb = (ab)+ I \subsetneq J$$ by Theorem  \ref{1.1} (1). Thus, the correspondence $\varphi$ is bijective, which completes the proof of Theorem \ref{1.2}.
\end{proof}


\section{The case where $R$ possesses minimal multiplicity}

In this section, we focus our attention on the case where $R$ possesses minimal multiplicity. Throughout, we assume that $\rmv(R)=\rme(R)>1$. Hence, $\fkm \in \calX_R$ and $\mu_R(I) = v$ for all $I \in \calX_R$ by Corollary \ref{3.2}, where $v = \rmv(R)$.  We choose an element $\alpha\in\m$ so that $\m^2=\alpha\m$.

Let $I, J\in\mathcal{X}_R$ such that $I\subsetneq J$ and assume that there are no Ulrich ideals contained strictly between $I$ and $J$. Let $b\in J$ with $J^2=bJ$ and set $B=J:J$. Hence $B = \displaystyle\frac{J}{b}$, and $J=(b)+I$ by Theorem \ref{3.1}. Remember that by Theorem \ref{3.7}, $B$ is a local ring and $\rmv(B)=\rme(B) = \rme(R)>1$. We have $\n^2=\alpha\n$ by the proof of Theorem \ref{3.7} (4), where $\n$ denotes the maximal ideal of $B$.

We furthermore have the following.

\begin{lemma}\label{5.1}
The following assertions hold true.
\begin{enumerate}[{\rm (1)}]
\item $\ell_R(J/I)=1$.
\item $I=b\n=J\n$. Hence, the ideal $I$ is uniquely determined by $J$, and $I:I=\n:\n$. 
\item $(b\alpha)$ is a reduction of $I$. If $I = (b\alpha) + (x_2, x_3,\ldots, x_v)$, then $J=(b, x_2, x_3,\ldots, x_v)$.
\end{enumerate}
\end{lemma}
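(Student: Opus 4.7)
The plan is to pin down $\displaystyle\frac{I}{b}$ as the maximal ideal $\fkn$ of $B$ by invoking the bijective correspondence of Theorem \ref{1.2}, after which assertions (1), (2), and (3) all reduce to short computations.

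First, I would observe that both $\displaystyle\frac{I}{b}$ and $\fkn$ lie in $\calX_B$: the former by Theorem \ref{3.7}(3), and the latter because Theorem \ref{3.7}(4) transfers minimal multiplicity from $R$ to $B$, so $\fkn$ is the maximal ideal of a Cohen-Macaulay local ring of dimension one with $\rmv(B) = \rme(B) > 1$, hence Ulrich. Since $\fkn = \fkm + \displaystyle\frac{I}{b}$, clearly $\displaystyle\frac{I}{b} \subseteq \fkn$. Thus both ideals belong to the right-hand set of the bijection in Theorem \ref{1.2}. The assumption that no Ulrich ideal lies strictly between $I$ and $J$ makes the left-hand set equal $\{I\}$, and hence the right-hand set is a singleton, forcing $\displaystyle\frac{I}{b} = \fkn$. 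This is the key identity on which everything else rests.

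Assertion (1) then follows from $J/I = bB/b\fkn \cong B/\fkn \cong R/\fkm$, which has length $1$. For (2), multiplying the identity by $b$ gives $I = b\fkn$, whence $J\fkn = (bB)\fkn = b\fkn = I$; since $b$ is a non-zero-divisor, $I:I = b\fkn : b\fkn = \fkn : \fkn$. For uniqueness of $I$ given $J$, I would note that any two $b, b' \in J$ with $J^2 = bJ = b'J$ differ by a unit of $B$ (both generate $J$ over $B$), and units stabilize $\fkn$, so $b\fkn$ depends only on $J$. For (3), using $\fkn^2 = \alpha\fkn$, I would compute
\[
I^2 = (b\fkn)^2 = b^2 \fkn^2 = b^2\alpha\fkn = (b\alpha)(b\fkn) = (b\alpha) I,
\]
showing that $(b\alpha)$ is a reduction of $I$. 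Finally, if $I = (b\alpha) + (x_2, \ldots, x_v)$, then by Theorem \ref{3.1}(1), $J = (b) + I = (b, b\alpha, x_2, \ldots, x_v) = (b, x_2, \ldots, x_v)$, and this is a minimal system because $\mu_R(J) = v$ by Corollary \ref{3.2}.

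The main obstacle I anticipate is the first step: correctly checking that both $\displaystyle\frac{I}{b}$ and $\fkn$ are legitimate elements of the right-hand set of Theorem \ref{1.2}'s correspondence (in particular, that $\fkn \in \calX_B$, which requires the transfer of minimal multiplicity in Theorem \ref{3.7}(4)). Once this is confirmed, the ``no intermediate Ulrich ideals'' hypothesis does all the heavy lifting via bijectivity, and the remainder of the lemma collapses into direct identities with $b\fkn$.
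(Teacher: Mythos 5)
Your proposal is correct and follows essentially the same route as the paper: both identify $\fkn=\displaystyle\frac{I}{b}$ by noting that the left-hand set of the Theorem \ref{1.2} correspondence is the singleton $\{I\}$ while the right-hand set contains $\fkn$, and then derive (1)--(3) by the same direct computations with $I=b\fkn$ and $\fkn^2=\alpha\fkn$. The only difference is cosmetic: you add an explicit unit argument for the uniqueness claim in (2), which the paper leaves implicit via $I=J\fkn$.
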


\begin{proof}
By Theorem \ref{1.2}, we have the one-to-one correspondence $$\{\fka\in\mathcal{X}_R |\  I\subseteq \fka\subsetneq J\} \overset{\varphi}{\longrightarrow} \{\fkb\in\mathcal{X}_B |\  \frac{I}{b}\subseteq \fkb\},\ \  \fka\mapsto \frac{\fka}{b},$$ where the set of the left hand side is a singleton consisting of $I$, and the set of the right hand side contains $\n$. Hence $\n=\displaystyle\frac{I}{b}$, that is $I=b\n=J\n$, because $J = bB$. Therefore, $I^2=b^2\n^2=b\alpha{\cdot}b\n=b\alpha{\cdot}I$, so that $(b\alpha)$ is a reduction of $I$. Because $$J/I=bB/b\n \cong B/\n$$ and $R/\m \cong B/\n$ by Theorem \ref{3.7} (2), we get $\ell_R(J/I)=1$. Assertion (3) is clear, since $J = (b)+I$.
\end{proof}

Since $\ell_R(R/I) < \infty$ for all $I \in \calX_R$, we get the following.

\begin{corollary}\label{5.2}
Suppose that $I, J\in\mathcal{X}_R$ and $I\subsetneq J$. Then there exists a composition series $I=I_\ell\subsetneq I_{\ell-1}\subsetneq\cdots\subsetneq I_1=J$ such that $I_i\in\mathcal{X}_R$ for all $1\leq i\leq\ell$, where $\ell=\ell_R(J/I)+1$.
\end{corollary}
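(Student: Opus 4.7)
The plan is to construct the chain greedily from the top, using Lemma~\ref{5.1}(1) as the workhorse to shave off one unit of length at each step. Given the pair $I \subsetneq J$ of Ulrich ideals, I will first produce a maximal intermediate Ulrich ideal $K$ with $I \subseteq K \subsetneq J$; Lemma~\ref{5.1}(1) will then force $\ell_R(J/K) = 1$, reducing the problem to the strictly shorter pair $I \subseteq K$, and an induction on $\ell_R(J/I)$ will finish.

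First I would set
$$\mathcal{S} = \{\fka \in \mathcal{X}_R \mid I \subseteq \fka \subsetneq J\}.$$
This set is nonempty (it contains $I$) and finite (only finitely many ideals lie between $I$ and $J$ at all, since $\ell_R(R/I) < \infty$), so it admits a maximal element $K$ with respect to inclusion. By the very maximality of $K$, no Ulrich ideal of $R$ lies strictly between $K$ and $J$, which is precisely the standing hypothesis of Lemma~\ref{5.1}; its part (1) therefore gives $\ell_R(J/K) = 1$.

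I would then proceed by induction on $n := \ell_R(J/I)$. The base case $n = 1$ is immediate, as the chain $I_2 = I \subsetneq I_1 = J$ already has the desired form with $\ell = 2 = n+1$. For the inductive step, the short exact sequence $0 \to K/I \to J/I \to J/K \to 0$ yields $\ell_R(K/I) = n-1$; applying the inductive hypothesis to the pair $I \subseteq K$ (both in $\mathcal{X}_R$) produces a chain $I = I_\ell \subsetneq I_{\ell-1} \subsetneq \cdots \subsetneq I_2 = K$ of Ulrich ideals with $\ell - 1 = \ell_R(K/I) + 1 = n$, so $\ell = n+1$. Prepending $I_1 = J$ gives the desired composition series.

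There is no real obstacle here: Lemma~\ref{5.1}(1) does all the heavy lifting, and the only point requiring care is ensuring its applicability, which is automatic from choosing $K$ maximal in $\mathcal{S}$. If one wanted to avoid induction entirely, one could equivalently iterate the construction $J \rightsquigarrow K$ and halt when $K$ coincides with $I$; the finiteness of $\ell_R(J/I)$ guarantees termination in exactly $n$ steps.
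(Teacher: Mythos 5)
Your argument is essentially the paper's own: the authors give no explicit proof of Corollary \ref{5.2}, deriving it directly from Lemma \ref{5.1}~(1) together with the finiteness of $\ell_R(J/I)$, which is exactly the refine-by-a-maximal-intermediate-Ulrich-ideal scheme you carry out. One correction, though: your parenthetical claim that $\mathcal{S}$ is \emph{finite} ``since $\ell_R(R/I)<\infty$'' is not valid --- an Artinian local ring may have infinitely many ideals (e.g.\ the intermediate ideals between $\fkm^2$ and $\fkm$ correspond to subspaces of $\fkm/\fkm^2$, of which there are infinitely many over an infinite residue field), so finite length alone does not bound the number of ideals between $I$ and $J$. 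This does not damage the proof: a maximal element of $\mathcal{S}$ exists anyway because $\ell_R(\fka/I)\le \ell_R(J/I)<\infty$ for every $\fka\in\mathcal{S}$, so one may simply choose $K\in\mathcal{S}$ maximizing $\ell_R(K/I)$. With that repair, the rest of your induction --- including the implicit point that $K\ne I$ whenever $\ell_R(J/I)\ge 2$, so that the inductive hypothesis applies to the strict inclusion $I\subsetneq K$ --- goes through as written.
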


The following is the heart of this section. 

\begin{thm}\label{5.3}
The set $\mathcal{X}_R$ is totally ordered with respect to inclusion.
\end{thm}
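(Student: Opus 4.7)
The plan is to show that every Ulrich ideal of $R$ lies on a single descending chain starting from $\fkm$, after which the total ordering of $\calX_R$ is immediate.

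The starting point is Lemma \ref{5.1}(2): when $I \subsetneq J$ are adjacent Ulrich ideals (meaning no Ulrich ideal lies strictly between them), one has the intrinsic formula $I = J\fkn$, where $\fkn$ is the maximal ideal of $B = J:J$. In particular $I$ is uniquely determined by $J$. I would use this to define a partial function $\sigma$ on $\calX_R$ sending $J$ to the unique Ulrich ideal immediately below it; by Corollary \ref{5.2}, $\sigma(J)$ is defined precisely when there exists at least one Ulrich ideal strictly below $J$ (since any such inclusion can be refined into an Ulrich composition series whose penultimate term is adjacent to $J$).

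Next, I would iteratively set $I_1 = \fkm$ and $I_{k+1} = \sigma(I_k)$ whenever the right-hand side is defined, producing a canonical descending chain $I_1 \supsetneq I_2 \supsetneq \cdots$ of Ulrich ideals. The main claim is that every $I \in \calX_R$ occurs on this chain. For $I \neq \fkm$, Corollary \ref{5.2} supplies a composition series of Ulrich ideals $\fkm = K_1 \supsetneq K_2 \supsetneq \cdots \supsetneq K_m = I$. A routine induction on $k$ gives $K_k = I_k$: granting $K_k = I_k$ with $k < m$, the ideal $K_{k+1}$ is an Ulrich ideal strictly below $K_k$, so $\sigma(I_k) = I_{k+1}$ is defined; furthermore $\ell_R(K_k/K_{k+1}) = 1$ forbids any $R$-submodule strictly between them and so in particular any Ulrich ideal, making $K_{k+1}$ adjacent to $K_k$, and the uniqueness clause of Lemma \ref{5.1}(2) then forces $K_{k+1} = I_{k+1}$.

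Consequently every Ulrich ideal of $R$ appears on the single chain $\{I_k\}$, so any two Ulrich ideals are comparable and $\calX_R$ is totally ordered. I do not anticipate any serious obstacle: the essential input, namely the uniqueness in Lemma \ref{5.1}(2), is already available, and the only point requiring care is verifying at each inductive step that no Ulrich ideal fits strictly between $K_k$ and $K_{k+1}$, which is automatic since consecutive terms of a composition series admit no intermediate $R$-submodules whatsoever.
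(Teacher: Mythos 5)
Your proposal is correct and uses exactly the same two ingredients as the paper's own proof: Corollary \ref{5.2} to refine any inclusion of Ulrich ideals into an Ulrich composition series, and the uniqueness clause of Lemma \ref{5.1} (2) (the immediate Ulrich predecessor of $J$ is forced to be $J\fkn$) to show that any two such series descending from $\fkm$ agree term by term. The paper phrases this as a direct comparison of two composition series rather than via a canonical chain built from your $\sigma$, but the argument is essentially identical.
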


\begin{proof}
Suppose that there exist $I, J\in\mathcal{X}_R$ such that $I\nsubseteq J$ and $J\nsubseteq I$. Since $I\subsetneq\m$ and $J\subsetneq\m$, thanks to Corollary \ref{5.2}, we get composition series 
$$I=I_\ell\subsetneq I_{\ell-1}\subsetneq\cdots\subsetneq I_1=\m\ \ \ \text{and}\ \ J=J_n\subsetneq J_{n-1}\subsetneq\cdots\subsetneq J_1=\m$$
such that  $I_i, J_j\in\mathcal{X}_R$ for all $1\leq i\leq\ell$ and $1\leq j\leq n$. We may assume $\ell\leq n$. Then Lemma \ref{5.1} (2) shows that $I_i=J_i$ for all $1\leq i\leq\ell$, whence $J \subseteq J_\ell=I_\ell\subseteq I$. This  is a contradiction.
\end{proof}

\begin{remark}
Theorem \ref{5.3} is no longer true, unless $R$ possesses minimal multiplicity. For example, let $k$ be a field and consider $R = k[[t^3,t^7]]$ in the formal power series ring $k[[t]]$. Then, $\calX_R= \{(t^6-ct^7, t^{10}) \mid 0 \ne c \in k\}$, which is not totally ordered, if $\sharp k >2$. See Example \ref{4.6} (3) also.
\end{remark}

Let us now summarize the results in the case where $R$ possesses minimal multiplicity.

\begin{thm}\label{5.4}
Let $I\in\mathcal{X}_R$ and take a composition series $$(E)\ \ \ I=I_\ell\subsetneq I_{\ell-1}\subsetneq\cdots\subsetneq I_1=\m$$ so that $I_i\in\mathcal{X}_R$ for every $1\leq i\leq \ell=\ell_R(R/I)$. We set $B_0=R$ and $B_i=I_i:I_i$ for $1\leq i\leq\ell$ and let $\n_i=\rmJ(B_i)$ denote the Jacobson radical of $B_i$ for each $0\leq i\leq\ell$. Then we obtain a tower
$$R=B_0 \subsetneq B_1 \subsetneq \cdots \subsetneq B_{\ell -1} \subsetneq B_\ell \subseteq \overline{R}$$
of birational finite extensions of $R$ and furthermore have the following.
\begin{enumerate}[{\rm (1)}]
\item $(\alpha^i)$ is a reduction of $I_i$ for every $1\leq i\leq\ell$.
\item $B_i=\n_{i-1}:\n_{i-1}$ for every $1\leq i\leq\ell$.
\item For $0\leq i\leq\ell-1$, $(B_i,\n_i)$ is a local ring with $\rmv(B_i)=\rme(B_i)=\rme(R)>1$ and $\n_i^2=\alpha\n_i$.
\item Choose $x_2,x_3, \ldots, x_v\in I$ so that $I=(\alpha^\ell, x_2,\ldots, x_v)$. Then $I_i=(\alpha^i, x_2,x_3, \ldots, x_v)$ for every $1\leq i\leq\ell$. In particular, $\m = (\alpha, x_2, x_3, \ldots, x_v)$, so that the series $(E)$ is a unique composition series of ideals in $R$ which connects $I$ and $\m$. 
\item[{\rm (5)}] Let $J$ be an ideal of $R$ and assume that $I \subseteq J \subseteq \m$. Then $J = I_i$ for some $1 \le i \le \ell$.
\end{enumerate}
\end{thm}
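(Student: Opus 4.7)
The plan is to establish the assertions in the order (3), (1), (2), the tower, (4), and (5), with the uniqueness of $(E)$ deduced from (5). Throughout the induction, the key point is that each pair $I_i\subsetneq I_{i-1}$ in $(E)$ is a pair of Ulrich ideals with no Ulrich ideal strictly between them, since $\ell_R(I_{i-1}/I_i)=1$, so Lemma \ref{5.1} applies at each step. First, assertion (3) falls out directly by applying Theorem \ref{3.7} to the pair $I_{i+1}\subsetneq I_i$ (which is available precisely for $0\le i\le\ell-1$): parts (2) and (4) of that theorem give that $B_i$ is a one-dimensional Cohen--Macaulay local ring with $\rmv(B_i)=\rme(B_i)=\rme(R)>1$, while the discussion preceding Lemma \ref{5.1} yields $\n_i^2=\alpha\n_i$. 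The tower of strict inclusions $B_i\subsetneq B_{i+1}\subseteq\overline{R}$ comes from the chain $R\subsetneq B\subsetneq A$ obtained in Theorem \ref{3.7} (via Proposition \ref{2.1}) applied to the same pair.

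Next I would prove (1) by induction on $i$. The base case $i=1$ is the hypothesis $\m^2=\alpha\m$. For the inductive step, the relation $I_{i-1}^2=\alpha^{i-1}I_{i-1}$ supplied by the inductive hypothesis shows that $b=\alpha^{i-1}$ is a legitimate choice in Lemma \ref{5.1}, so Lemma \ref{5.1}(3) gives that $(b\alpha)=(\alpha^i)$ is a reduction of $I_i$. Assertion (2) is then Lemma \ref{5.1}(2) applied to $I_i\subsetneq I_{i-1}$. For the generator formula in (4), I would proceed by downward induction on $i$: the case $i=\ell$ is the choice of the $x_j$, and for the inductive step Theorem \ref{3.1}(1) gives $I_{i-1}=(b)+I_i$ for any $b$ with $I_{i-1}^2=bI_{i-1}$. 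Taking $b=\alpha^{i-1}$ via (1) and invoking the inductive hypothesis, we obtain $I_{i-1}=(\alpha^{i-1})+(\alpha^i,x_2,\ldots,x_v)=(\alpha^{i-1},x_2,\ldots,x_v)$; in particular $\m=I_1=(\alpha,x_2,\ldots,x_v)$.

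Finally, (5) follows from the explicit form of $I$ just obtained: the maximal ideal of the Artinian local ring $R/I$ is the principal ideal $(\bar\alpha)$ (where $\bar\alpha$ denotes the image of $\alpha$), since the images of $x_2,\ldots,x_v$ vanish in $R/I$. Because $\bar\alpha^\ell=0$ and $\ell_R(R/I)=\ell$, the chain $0=(\bar\alpha^\ell)\subsetneq(\bar\alpha^{\ell-1})\subsetneq\cdots\subsetneq(\bar\alpha)\subsetneq R/I$ exhausts the ideals of $R/I$, so any ideal $J$ with $I\subseteq J\subseteq\m$ pulls back to $(\alpha^j)+I=(\alpha^j,x_2,\ldots,x_v)=I_j$ for a unique $1\le j\le\ell$, yielding (5). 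The uniqueness of $(E)$ as a composition series of ideals connecting $I$ and $\m$ then follows at once: any such series has length $\ell-1$, its terms must be among the $I_j$ by (5), and simplicity of the consecutive quotients forces the indices to be exactly $1,2,\ldots,\ell$.

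The main obstacle I anticipate is coherently threading the element $\alpha^{i-1}$ through the successive applications of Lemma \ref{5.1} and Theorem \ref{3.1}, since those results are formulated for an arbitrary $b$ with $J^2=bJ$; once (1) has been verified, this compatibility is automatic and the remaining assertions reduce to bookkeeping together with the observation that $R/I$ is Artinian local with principal maximal ideal.
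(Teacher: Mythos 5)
Your proof is correct and follows essentially the same route as the paper, which simply asserts that (1), (2), (3) and the first part of (4) follow ``by standard induction on $\ell$'' and derives the uniqueness of $(E)$ and assertion (5) from the cyclicity of $\fkm/I$; your write-up fills in that induction via Lemma \ref{5.1} and Theorems \ref{3.1} and \ref{3.7} exactly as intended. The only (immaterial) difference is that you deduce the uniqueness of $(E)$ from (5), whereas the paper deduces (5) from the uniqueness.
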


\begin{proof}
The uniqueness of composition series in Assertion (4) follows from the fact that the maximal ideal $\m/I$ of $R/I$ is cyclic, and then, Assertion (5) readily follows from the uniqueness. Assertions (1), (2), (3), and the first part of Assertion (4) follow by standard induction on $\ell$. 
\end{proof}

\begin{corollary}\label{5.5}
Suppose that there exists a minimal element $I$ in $\mathcal{X}_R$. Then $\sharp\mathcal{X}_R=\ell<\infty$ with $\ell=\ell_R(R/I)$.
\end{corollary}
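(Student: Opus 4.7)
The plan is to exploit two facts already in hand: the total ordering of $\calX_R$ (Theorem \ref{5.3}) and the rigidity statement for ideals sandwiched between a minimal Ulrich ideal and $\fkm$ (Theorem \ref{5.4}(5)). The minimality hypothesis becomes much stronger than it looks at first glance: in a totally ordered family, a minimal element is actually the unique minimum.

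First I would observe that since $\calX_R$ is totally ordered with respect to inclusion, the minimal element $I$ is in fact the smallest element of $\calX_R$, so $I \subseteq J$ holds for \emph{every} $J \in \calX_R$. Moreover, every Ulrich ideal is $\fkm$-primary and proper, hence contained in $\fkm$, so every $J \in \calX_R$ satisfies $I \subseteq J \subseteq \fkm$.

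Next, I would invoke Theorem \ref{5.4}: applying it to this minimal $I$, one obtains a composition series
$$I = I_\ell \subsetneq I_{\ell-1} \subsetneq \cdots \subsetneq I_1 = \fkm$$
with $I_i \in \calX_R$ for all $1 \le i \le \ell$, where $\ell = \ell_R(R/I)$. By part (5) of the same theorem, any ideal $J$ of $R$ with $I \subseteq J \subseteq \fkm$ must coincide with some $I_i$. Combining this with the previous paragraph, every element of $\calX_R$ appears among $I_1, \ldots, I_\ell$, and conversely all the $I_i$ lie in $\calX_R$ and are pairwise distinct. Hence $\calX_R = \{I_1, I_2, \ldots, I_\ell\}$, giving $\sharp \calX_R = \ell = \ell_R(R/I) < \infty$.

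There is essentially no obstacle here: the content has been absorbed into Theorems \ref{5.3} and \ref{5.4}. The only thing to watch is the indexing convention, namely that the composition series connecting $I$ to $\fkm$ has exactly $\ell_R(R/I)$ terms (since $\ell_R(R/\fkm) = 1$ contributes the final step $\fkm \subsetneq R$ to the length count but not to the list of Ulrich ideals), which matches the formula $\ell = \ell_R(R/I)$ asserted in Theorem \ref{5.4}.
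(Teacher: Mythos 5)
Your proof is correct and is essentially identical to the paper's own argument: both use Theorem \ref{5.3} to upgrade minimality to being the smallest element, and then Theorem \ref{5.4}(5) to conclude that every $J \in \calX_R$ is one of the $I_i$ in the composition series from $I$ to $\fkm$. No issues.
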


\begin{proof}
Since $\calX_R$ is totally ordered by Theorem \ref{5.3}, $I$ is the smallest element in $\calX_R$, so that $I \subseteq J$ for all $J \in \calX_R$. Therefore, by Theorem \ref{5.4} (5), $J$ is one of the $I_i$'s in the compoosition series $I=I_\ell\subsetneq I_{\ell-1}\subsetneq\cdots\subsetneq I_1=\m$.
\end{proof}

\begin{corollary}\label{5.5a}
If $\widehat{R}$ is a reduced ring, then $\calX_R$ is a finite set.
\end{corollary}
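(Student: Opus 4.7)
The plan is to reduce to Corollary \ref{5.5}: that result yields finiteness of $\calX_R$ as soon as $\calX_R$ possesses a minimal (equivalently, smallest, by Theorem \ref{5.3}) element. So it will suffice to rule out an infinite strictly descending chain in the totally ordered set $\calX_R$. Note that $\calX_R$ is nonempty since $\fkm \in \calX_R$ by the running assumption $\rmv(R) = \rme(R) > 1$ of Section 4, and $\fkm$ is in fact the largest element of $\calX_R$.

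Suppose, toward a contradiction, that
$$\fkm = I_1 \supsetneq I_2 \supsetneq I_3 \supsetneq \cdots$$
is an infinite strictly descending chain in $\calX_R$, and set $B_i = I_i : I_i$. By Proposition \ref{2.1}, each $B_i = R : I_i$, and the rule $I \mapsto R:I$ is injective on $\calX_R$ thanks to $R:(R:I) = I$. Hence the induced chain $B_1 \subsetneq B_2 \subsetneq B_3 \subsetneq \cdots \subseteq \overline{R}$ is a strictly ascending chain of birational finite extensions of $R$ sitting inside $\overline{R}$.

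The key step is now to invoke the classical theorem of Krull--Nagata: a one-dimensional Noetherian local ring is analytically unramified (i.e., $\widehat{R}$ is reduced) if and only if its integral closure $\overline{R}$ is a finitely generated $R$-module. Under the hypothesis that $\widehat{R}$ is reduced, $\overline{R}$ is therefore a Noetherian $R$-module, and so the ascending chain $\{B_i\}$ of $R$-submodules of $\overline{R}$ must stabilize. This contradicts the strictness of that chain, so $\calX_R$ admits a smallest element, and Corollary \ref{5.5} yields the conclusion.

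The main obstacle is simply the appeal to module-finiteness of $\overline{R}$: it is the one ingredient not obtainable from the structural machinery developed in the paper, but it is a standard classical result and needs only to be cited. Everything else is a straightforward application of the Ulrich duality $I \leftrightarrow R:I$ (Proposition \ref{2.1}), the total ordering of $\calX_R$ (Theorem \ref{5.3}), and Corollary \ref{5.5}.
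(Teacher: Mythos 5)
Your proof is correct, and it follows the paper's overall strategy: produce a minimal element of $\calX_R$ and then quote Corollary \ref{5.5}. Both arguments also rest on the same external input, the Krull--Nagata theorem that analytic unramifiedness forces $\overline{R}$ to be a finite $R$-module. Where you diverge is in how the minimal element is extracted. The paper reads off from Theorem \ref{5.4} the tower $R = B_0 \subsetneq B_1 \subsetneq \cdots \subsetneq B_\ell \subseteq \overline{R}$ attached to a composition series of Ulrich ideals between $I$ and $\fkm$, which gives the uniform colength bound $\ell_R(R/I) \le \ell_R(\overline{R}/R) < \infty$ for every $I \in \calX_R$ (here $\overline{R}/R$ has finite length because $R$ is one-dimensional and reduced, so that $R_{\fkp} = \overline{R}_{\fkp}$ at the minimal primes); a uniform bound on $\ell_R(R/I)$ immediately yields a minimal element. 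You instead dualize, sending $I$ to $B_I = I:I = R:I$, and apply the ascending chain condition on $R$-submodules of the Noetherian module $\overline{R}$, with the strictness of the ascending chain guaranteed by the biduality $R:(R:I) = I$ of Proposition \ref{2.1} (valid for Ulrich ideals by Remark \ref{2.3}). Both mechanisms are sound. The paper's version carries the small bonus of an explicit bound $\sharp\calX_R = \ell_R(R/I_{\min}) \le \ell_R(\overline{R}/R)$ once combined with Corollary \ref{5.5}, whereas yours is purely qualitative but needs only Proposition \ref{2.1} and not the full tower structure of Theorem \ref{5.4}.
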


\begin{proof}
Since by Theorem \ref{5.4} $\ell_R(R/I) \le \ell_R(\overline{R}/R)<\infty$ for every $I \in \calX_R$, the set $\calX_R$ contains a minimal element, so that $\calX_R$ is a finite set.
\end{proof}

Here let us note the following.

\begin{ex}\label{5.6}
Let $(S, \n)$ be a two-dimensional regular local ring. Let $\n=(X, Y)$ and consider the ring $A=S/(Y^2)$. Then $\rmv(A) = \rme(A)=2$ and $$\mathcal{X}_A=\left\{(x^n, y)  \mid  n \geq 1\right\}$$ where $x, y$ denote the images of $X,Y$ in $A$, respectively. Hence $\sharp\mathcal{X}_A=\infty$.
\end{ex}

\begin{proof}
Let $I_n=(x^n, y)$ for each $n\geq 1$. Then $(x^n)\subsetneq I_n$ and $I_n^2=x^nI_n$. Let $\rmJ(A)=(x, y)$ be the maximal ideal of $A$. We then have $J(A)^2=xJ(A)$, whence $\rmv(A)=\rme(A)=2$. Because $I_n=(x^n):_Ay$, we get $I_n/(x^n)\cong A/I_n$. Therefore, $I_n\in\mathcal{X}_A$ for all $n \ge 1$. To see that $\calX_A$ consists of these ideals $I_n$'s, let $I\in\mathcal{X}_A$ and set $\ell=\ell_A(A/I)$. Then $I\subseteq I_\ell$ or $I\supseteq I_\ell$, since $\mathcal{X}_A$ is totally ordered. In any case, $I=I_\ell$, because $\ell_A(A/I_\ell)=\ell$. Hence $\mathcal{X}_A=\left\{(x^n, y)  \mid   n\geq1\right\}$.
\end{proof}

We close this section with the following. Here, the hypothesis about the existence of a fractional canonical ideal $K$ is equivalent to saying that $R$ contains an $\m$-primary ideal $I$ such that $I \cong \rmK_R$ as an $R$-module and such that $I$ possesses a reduction $Q=(a)$ generated by a single element $a$ of $R$ (\cite[Corollary 2.8]{GMP}). The latter condition is satisfied, once $\rmQ(\widehat{R})$ is a Gorenstein ring and the field $R/\m$ is infinite.

\begin{thm}\label{5.8}
Suppose that there exists a fractional ideal $K$ of $R$ such that $R\subseteq K\subseteq \overline{R}$ and $K\cong\rmK_R$ as an $R$-module. Then the following conditions are equivalent.
\begin{enumerate}[{\rm (1)}]
\item $\sharp\mathcal{X}_R=\infty$.
\item $\rme(R)=2$ and $\widehat{R}$ is not a reduced ring.
\item The ring $\widehat{R}$ has the form $\widehat{R} \cong S/(Y^2)$ for some regular local ring $(S,\n)$ of dimension two with $Y \in \n \setminus \n^2$.
\end{enumerate}
\end{thm}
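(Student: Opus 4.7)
The plan is to establish the three-way equivalence via the implications $(2)\Leftrightarrow(3)$, $(3)\Rightarrow(1)$, and $(1)\Rightarrow(2)$. The direction $(3)\Rightarrow(2)$ is immediate, since $S/(Y^2)$ contains the nonzero nilpotent $Y$ and has multiplicity equal to $\mathrm{ord}(Y^2)=2$. For $(2)\Rightarrow(3)$ I invoke Cohen's structure theorem: non-reducedness prevents $\widehat{R}$ from being a DVR, so $\rmv(\widehat{R})\geq 2$, while $\rmv(\widehat{R})\leq\rme(\widehat{R})=2$ forces $\rmv(\widehat{R})=2$. Hence $\widehat{R}$ is a hypersurface $\widehat{R}\cong S/(f)$ for a two-dimensional regular local ring $(S,\fkn)$ with $f\in\fkn^2\setminus\fkn^3$. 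Non-reducedness prevents $f$ from being squarefree in the UFD $S$; since $f$ has order two, the only possibility is $f=ug^2$ for a unit $u\in S$ and some $g\in\fkn\setminus\fkn^2$. Extending $g$ to a regular system of parameters of $S$ and setting $Y=g$ yields $\widehat{R}\cong S/(Y^2)$.

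For $(3)\Rightarrow(1)$, the isomorphism $\widehat{R}\cong S/(Y^2)$ shows that the nilradical of $\widehat{R}$ equals $(Y)$, which by excellence of $\widehat{R}$ coincides with $N\widehat{R}$ for $N$ the nilradical of $R$. Since $(Y)$ is cyclic over $\widehat{R}$, faithful flatness yields $\mu_R(N)=1$; writing $N=Ry$, the strict inclusion $\widehat{\fkm}^2\cap(Y)=(XY)\subsetneq(Y)$ forces $y\in\fkm\setminus\fkm^2$, while $N^2\widehat{R}\subseteq (Y)^2=0$ gives $y^2=0$ in $R$. By the standing minimal multiplicity hypothesis $\rmv(R)=\rme(R)=2$, we have $\fkm=(\alpha,y)$ with $\alpha$ a reduction of $\fkm$. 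The calculation in Example \ref{5.6} then transfers verbatim to $R$: each $I_n=(\alpha^n,y)$ satisfies $I_n^2=\alpha^n I_n$ (using $y^2=0$), and $I_n/(\alpha^n)$ is cyclic generated by the class of $y$, so a length comparison gives $I_n/(\alpha^n)\cong R/I_n$. Hence each $I_n\in\calX_R$; since $\ell_R(R/I_n)=n$, they are pairwise distinct and $\sharp\calX_R=\infty$.

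For $(1)\Rightarrow(2)$, Corollary \ref{5.5a} immediately yields that $\widehat{R}$ is non-reduced (its contrapositive gives this at once). To establish $\rme(R)=2$, assume toward a contradiction $e:=\rme(R)\geq 3$. The fractional canonical ideal hypothesis ensures that $Q(\widehat{R})$ is Gorenstein, so the Artinian localization $\widehat{R}_\fkp$ at each minimal prime $\fkp$ has one-dimensional socle over its residue field. Combining the multiplicity associativity formula $\rme(\widehat{R})=\sum_{\fkp\in\Min(\widehat{R})}\ell(\widehat{R}_\fkp)\cdot\rme(\widehat{R}/\fkp)$ with the embedding-dimension identity $\rmv(\widehat{R})=\rmv(\widehat{R}/N\widehat{R})+\mu_{\widehat{R}/N\widehat{R}}(N\widehat{R})$ (valid when $N^2\widehat{R}=0$) and the bound $\rmv\leq\rme$, the generic Gorensteinness (which forces the rank of $N\widehat{R}$ over each local factor to be one) and minimal multiplicity together yield $e=2$, contradicting $e\geq 3$. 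The main obstacle is handling the case $N^2\widehat{R}\neq 0$, where one must instead work along the tower $R=B_0\subsetneq B_1\subsetneq\cdots$ of Theorem \ref{5.4}, using the transfer of the canonical ideal to each $B_i$ together with the associated-prime analysis of $\widehat{R}$ to reduce to the $N^2\widehat{R}=0$ situation.
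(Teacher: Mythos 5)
Your $(2)\Leftrightarrow(3)$ argument is correct and essentially the paper's. The other two implications have genuine gaps. In $(3)\Rightarrow(1)$ you descend the nilpotent $Y$ to an element $y\in R$ by asserting that the nilradical of $\widehat{R}$ equals $N\widehat{R}$ ``by excellence of $\widehat{R}$''. This is a non sequitur: excellence of $\widehat{R}$ is automatic and irrelevant, and what you would actually need is that $R$ itself has geometrically reduced formal fibers, which is not among the hypotheses; there exist one-dimensional Noetherian local domains with non-reduced completion, so in general $\mathrm{nil}(R)\widehat{R}\subsetneq\mathrm{nil}(\widehat{R})$. Note also that $(Y)$ is a minimal prime of $\widehat{R}$, not an $\widehat{\fkm}$-primary ideal, so it does not descend through the usual ideal correspondence either. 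The paper avoids this entirely: it computes $\calX_{S/(Y^2)}$ once and for all (Example \ref{5.6}) and then uses the bijection $I\mapsto I\widehat{R}$ between $\fkm$-primary (hence Ulrich) ideals of $R$ and $\widehat{\fkm}$-primary ideals of $\widehat{R}$, which requires no descent of elements.

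In $(1)\Rightarrow(2)$, the non-reducedness via Corollary \ref{5.5a} is fine, but your derivation of $\rme(R)=2$ is not a proof. It never uses the hypothesis $\sharp\calX_R=\infty$, and the implication you claim --- that generic Gorensteinness, minimal multiplicity and non-reducedness already force $\rme=2$ --- is false: the idealization $T=k[[t^2,t^3]]\ltimes(t^2,t^3)$ is a complete one-dimensional Cohen--Macaulay local ring with $\rmv(T)=\rme(T)=4$, non-reduced, whose total ring of fractions $\rmQ(A)\ltimes \rmQ(A)$ is Gorenstein. You also explicitly leave the case $N^2\widehat{R}\neq 0$ unresolved. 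The paper's argument is different and uses (1) essentially: if $\rme(R)>2$ then $R$ is not Gorenstein, so every $I\in\calX_R$ satisfies $\mu_R(I)=\rmv(R)>2$ by Corollary \ref{3.2} and therefore contains $R:K$ by \cite[Corollary 2.12]{GTT2}; hence $\ell_R(R/I)$ is bounded, $\calX_R$ has a minimal element, and Corollary \ref{5.5} forces $\calX_R$ to be finite, contradicting (1). You need this (or some genuine substitute) to close the argument.
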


\begin{proof}
$(1)\Rightarrow(2)$ The ring $\widehat{R}$  is not reduced by Corollary \ref{5.5a}. Suppose $R$ is not a Gorenstein ring; hence $R \subsetneq K$ and $\rme(R) > 2$. We set $\fka = R:K$. Let $I \in \calX_R$. Then, since $\mu_R(I)=v = \rme(R)>2$ by Corollary  \ref{3.2}, we have $\fka \subseteq I$ by \cite[Corollary 2.12]{GTT2}, so that $\ell_R(R/I)\leq\ell_R(R/\fka)<\infty$. Therefore, the set $\calX_R$ contains a minimal element, which is a contradiction.

$(3) \Rightarrow(1) $ See Example \ref{5.6} and use the fact that there is a one-to-one correspondence $I \mapsto I\widehat{R}$ between Ulrich ideals of $R$ and $\widehat{R}$, respectively. 

$(2) \Rightarrow (3)$ Since $\rmv(R)=\rme(R)=2$, the completion $\widehat{R}$ has the form $\widehat{R} = S/I$, where $(S,\n)$ is a two-dimensional regular local ring and $I=(f)$ a principal ideal of $S$. Notice that $\rme(S/(f))=2$ and $\sqrt{(f)} \ne (f)$. We then have $(f)=(Y^2)$ for some $Y \in \n \setminus \n^2$, because $f \in \n^2 \setminus \n^3$.
\end{proof}

\begin{remark}
In Theorem \ref{5.8}, the hypothesis on the existence of fractional canonical ideals $K$ is not superfluous. In fact, let $V$ denote a discrete valuation ring and consider the idealization $R = V \ltimes F$ of the free $V$-module $F=V^{\oplus n}$~($n \ge 2$). Let $t$ be a regular parameter of $V$. Then for each $n \ge 1$, $I_n = (t^n) \times F$ is an Ulrich ideal of $R$ (\cite[Example 2.2]{GOTWY1}). Hence $\calX_R$ is infinite, but $\rmv(R) = \rme(R) =n+1 \ge 3$.  
\end{remark}

Higher dimensional cases are much wilder. Even though $(R,\m)$ is a two-dimensional Cohen-Macaulay local ring possessing minimal multiplicity, the set $\calX_R$ is not necessarily totally ordered. Before closing this section, let us note examples.

\begin{ex} We consider two examples. 
\begin{enumerate}[{\rm (1)}]
\item Let $S = k[[X_0, X_1, \ldots, X_n]]$~($n \ge 3$) be the formal power series ring over a field $k$. Let $\ell \ge 1$ be an integer and consider the $2 \times n$ matrix $$\bbM = \begin{pmatrix}
X_1&X_2&\cdots&X_{n}\\
X_0^\ell&X_1&\cdots&X_{n-1}\\
\end{pmatrix}.$$ We set $R = S/\bbI_2(\bbM)$, where $\bbI_2(\bbM)$ denotes the ideal of $S$ generated by the $2 \times 2$ minors of the matrix $\bbM$. Then, $R$ is a Cohen-Macaulay local ring of dimension two, possessing minimal multiplicity. For this ring, we have $$\calX_R=\{(x_0^i, x_1, x_2, \ldots, x_n) \mid 1 \le i \le \ell\},$$ where $x_i$ denotes the image of $X_i$ in $R$ for each $0 \le i \le n$. Therefore, the set $\calX_R$ is totally ordered with respect to inclusion.

\item Let $(S,\n)$ be a regular local ring of dimension three. Let $F, G, H, Z \in \n$ and assume that $\n =(F,G, Z)=(G,H,Z)=(H,F,Z)$. (For instance, let $S=k[[X,Y,Z]]$ be the formal power series ring over a field $k$ with $\operatorname{ch} k \ne 2$, and choose $F = X, G=X+Y, H=X-Y$.) We consider the ring $R = S/(Z^2 - FGH)$. Then $R$ is a two-dimensional Cohen-Macaulay local ring of minimal multiplicity two. Let $f,g,h, z$ denote, respectively, the images of $F,G,H,Z$ in $R$. Then, $(f,gh, z)$, $(g, fh, z)$, $(h,fg,z)$ are Ulrich ideals of $R$, but any two of them are incomparable. 
\end{enumerate}
\end{ex}

\section{The case where $R$ is a $\GGL$ ring}
In this section, we study the case where $R$ is a $\GGL$ ring. The notion of $\GGL$ rings is given by \cite{GK}. Let us briefly review the definition.

\begin{definition}[\cite{GK}]\label{4.1}
Suppose that $(R,\m)$ is a Cohen-Macaulay local ring with $d= \dim R \ge 0$, possessing the canonical module $\rmK_R$.  We say that $R$ is a {\em generalized Gorenstein local} ($\GGL$ for short) ring, if one of the following conditions is satisfied.
\begin{enumerate}[{\rm (1)}]
\item $R$ is a Gorenstein ring.
\item $R$ is not a Gorenstein ring, but there exists an exact sequence
$$0 \to R \xrightarrow{\varphi} \rmK_R \to C \to 0$$
of $R$-modules and an $\m$-primary ideal $\fka$ of $R$ such that
\begin{enumerate}
\item[$\mathrm{(i)}$] $C$ is an Ulrich $R$-module with respect to $\fka$ and
\item[$\mathrm{(ii)}$] the induced homomorphism $R/\fka \otimes_R \varphi : R/\fka \to \rmK_R/\fka \rmK_R$ is injective.
\end{enumerate}
\end{enumerate}
When  Case (2) occurs, we especially say that $R$ is a $\GGL$ ring with respect to $\fka$.
\end{definition}

Since our attention is focused on the one-dimensional case, here let us  summarize a few results on $\GGL$ rings of dimension one. Suppose that $(R,\m)$ is a Cohen-Macaulay local ring of dimension one, admitting a fractional canonical ideal $K$. Hence, $K$ is an $R$-submodule of $\overline{R}$ such that $K \cong \rmK_R$ as an $R$-module and $R \subseteq K \subseteq \overline{R}$. One can consult  \cite[Sections 2, 3]{GMP} and \cite[Vortrag 2]{HK} for basic properties of $K$. We set $S = R[K]$ in $\rmQ(R)$. Therefore, $S$ is a birational finite extension of $R$ with $S=K^n$ for all $n \gg 0$, and the ring $S=R[K]$ is independent of the choice of $K$ (\cite[Theorem 2.5]{CGKM}). We set $\fkc=R:S$. First of all, let us note the following.

\begin{lemma}[{cf. \cite[Lemma 3.5]{GMP}}]\label{4.1a} $\fkc = K : S$ and $S=\fkc:\fkc=R:\fkc$.
\end{lemma}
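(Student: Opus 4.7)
My approach relies on two standard facts about a fractional canonical ideal $K$ in a one-dimensional Cohen-Macaulay ring: the endomorphism identity $K:K = R$, and the canonical duality $K:(K:M) = M$ valid for any fractional ideal $M$ which is a maximal Cohen-Macaulay $R$-module. Combined with the key hypothesis that $S = K^n$ for all $n \gg 0$, these will reduce everything to bookkeeping with ideal quotients in $\rmQ(R)$.

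First I would establish $\fkc = K:S$. Choose $n$ large enough that both $K^n = S$ and $K^{n-1} = S$. Using the routine adjunction $K:(AB) = (K:A):B$ for fractional ideals (which is just rearranging $xAB \subseteq K$), I compute
\[
K:S \;=\; K:K^n \;=\; (K:K):K^{n-1} \;=\; R:K^{n-1} \;=\; R:S \;=\; \fkc,
\]
where the middle equality uses $K:K = R$ and the penultimate equality uses $K^{n-1} = S$.

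Next I would prove $S = R:\fkc$. Applying canonical duality to the fractional ideal $S$ (which is MCM over the one-dimensional Cohen-Macaulay ring $R$, hence $K$-reflexive), together with the step just proven, gives $K:\fkc = K:(K:S) = S$. Since $R \subseteq K$, this yields $R:\fkc \subseteq K:\fkc = S$. For the reverse inclusion, I would first observe that $\fkc$ is in fact an ideal of $S$: if $x \in \fkc$ and $s \in S$, then $(sx)S = x(sS) \subseteq xS \subseteq R$, so $sx \in \fkc$. Therefore $S \cdot \fkc \subseteq \fkc \subseteq R$, which says $S \subseteq R:\fkc$, and combining gives $R:\fkc = S$.

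Finally, $\fkc:\fkc = S$ is obtained as a sandwich: $S \subseteq \fkc:\fkc$ because $\fkc$ is already an $S$-ideal, while $\fkc:\fkc \subseteq R:\fkc = S$ because $\fkc \subseteq R$. The main technical point is the clean invocation of the duality identity $K:(K:S) = S$, which I would justify either by citing $K$-duality in \cite{HK} or by explicitly noting that $S$ is a finite $R$-module containing a non-zerodivisor, hence MCM and $K$-reflexive. Modulo that standard fact, the rest of the argument is purely formal manipulation.
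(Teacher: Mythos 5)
Your proof is correct and follows essentially the same route as the paper's: both arguments reduce everything to ideal-quotient bookkeeping using $K:K=R$, the adjunction $K:(AB)=(K:A):B$, and the biduality $K:(K:S)=S$ from \cite{HK}. The only cosmetic differences are that the paper uses $KS=S$ where you use $K^{n-1}=K^n=S$, and it derives the inclusions in steps (2) and (3) by formal quotient identities (e.g.\ $K:K\fkc=K:\fkc$ and $(K:S):\fkc=K:S\fkc$) rather than your explicit sandwich arguments, which rest on the same observation that $\fkc$ is an $S$-ideal.
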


\begin{proof} Since $R = K:K$ (\cite[Bemerkung 2.5 a)]{HK}), we have $\fkc = (K:K):S = K : KS = K : S$, while $R:\fkc = (K:K):\fkc = K: K\fkc = K : \fkc$. Hence $R : \fkc = K : \fkc = K:(K:S) = S$ (\cite[Definition 2.4]{HK}). Therefore, $\fkc:\fkc = (K:S):\fkc = K:S\fkc = K:\fkc = S$.
\end{proof}

We then have the characterization of $\GGL$ rings.

\begin{thm}[\cite{GK}]\label{4.2} Suppose that $R$ is not a Gorenstein ring. Then the following conditions are equivalent.
\begin{enumerate}[$(1)$]
\item $R$ is a $\GGL$ ring with respect to some $\fkm$-primary ideal $\fka$ of $R$.
\item $K/R$ is a free $R/\fkc$-module.
\item $S/R$ is a free $R/\fkc$-module.
\end{enumerate}
When this is the case, one necessarily has $\fka = \fkc$, and the following assertions hold true.
\begin{enumerate}[{\rm (i)}]
\item $R/\fkc$ is a Gorenstein ring.
\item $S/R\cong (R/\fkc)^{\oplus \rmr(R)}$ as an $R$-module.
\end{enumerate}
\end{thm}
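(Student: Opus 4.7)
I would fix a fractional canonical ideal $K$ with $R \subseteq K \subseteq \overline{R}$, so that identifying $K \cong \rmK_R$ turns the $\GGL$ sequence in Definition~\ref{4.1} into the natural inclusion $0 \to R \to K \to K/R \to 0$ with $C = K/R$. Since $C$ has finite length in our one-dimensional setting, the Ulrich condition on $C$ with respect to $\fka$ collapses to $C$ being a nonzero free $R/\fka$-module, which forces $\fka = \Ann_R(C) = R:K$. The injectivity condition $(ii)$ of Definition~\ref{4.1}, once one notes that $\fka K \subseteq R$, translates into $\fka K = \fka$, equivalently $K \subseteq \fka:\fka$.

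\textbf{Direction $(1)\Rightarrow(2)$ with $\fka=\fkc$.} The inclusion $\fka = R:K \supseteq R:S = \fkc$ is immediate. Conversely, since $\fka:\fka$ is a subring of $\rmQ(R)$ containing $K$, it contains all of $S = R[K]$, so $S\fka \subseteq \fka \subseteq R$, giving $\fka \subseteq R:S = \fkc$. Thus $\fka = \fkc$ and $K/R$ is a nonzero free $R/\fkc$-module of rank $\mu_R(K/R) = \rmr(R) - 1$.

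\textbf{Direction $(2)\Rightarrow(1)$ and the finer conclusions.} Running the previous step in reverse with $\fka = \fkc$: the Ulrich condition is hypothesis $(2)$, and the injectivity $\fkc K = \fkc$ is automatic from $\fkc \subseteq K \subseteq S = \fkc:\fkc$ (Lemma~\ref{4.1a}). For the remaining assertions $(i)$, $(ii)$, $(3)$ I would use canonical/Matlis duality in dimension one. Applying $\Hom_R(-, K)$ to $0 \to R \to K \to K/R \to 0$ and using $\Hom(K,K) = R$, $\Hom(R,K) = K$, and $\Ext^1(K,K) = 0$ (since $K$ is maximal Cohen--Macaulay) yields a self-duality $(K/R)^{\vee} \cong K/R$, where $(-)^{\vee} = \Ext_R^1(-, K)$ coincides with Matlis duality on finite length modules. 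Plugging in $(2)$ gives $(R/\fkc)^{\rmr(R)-1} \cong K_{R/\fkc}^{\rmr(R)-1}$; Krull--Schmidt over the local ring $R/\fkc$ (with both $R/\fkc$ and $K_{R/\fkc}$ indecomposable) forces $R/\fkc \cong K_{R/\fkc}$, which proves $(i)$. The analogous duality on $0 \to K \to S \to S/K \to 0$, using $\Hom(S,K) = K:S = \fkc$ by Lemma~\ref{4.1a} and $\Ext^1(S,K) = 0$, produces $S/K \cong K_{R/\fkc} \cong R/\fkc$ by $(i)$. The sequence $0 \to K/R \to S/R \to S/K \to 0$ then splits over $R/\fkc$ (its quotient being free), yielding $S/R \cong (R/\fkc)^{\rmr(R)}$ and hence both $(ii)$ and $(3)$.

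\textbf{Direction $(3)\Rightarrow(2)$.} The same duality applied to $0 \to R \to S \to S/R \to 0$ gives $(S/R)^{\vee} \cong K/\fkc$. Under $(3)$, this yields $K/\fkc \cong K_{R/\fkc}^{s}$ for $s = \mu_R(S/R)$. Comparing minimal numbers of generators in $0 \to R/\fkc \to K/\fkc \to K/R \to 0$ gives the identity $\rmr(R) = s \cdot \rmr(R/\fkc)$, and a numerical check using lengths then forces $\rmr(R/\fkc) = 1$, so $R/\fkc$ is Gorenstein; consequently $K_{R/\fkc} \cong R/\fkc$, $K/\fkc$ becomes $R/\fkc$-free, its submodule $R/\fkc$ splits off, and $K/R$ is free over $R/\fkc$. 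The main obstacle is precisely this last direction: organizing the duality chain, the numerical identities, and the splitting argument so that Gorenstein-ness of $R/\fkc$ drops out without circularity. The rest of the proof reduces to repeated, careful applications of canonical duality in dimension one.
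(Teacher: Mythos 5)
The paper itself gives no proof of this theorem --- it is imported verbatim from the preprint \cite{GK} --- so there is no internal argument to compare against; I can only judge your outline on its own terms. Most of it is correct and efficient: the collapse of the Ulrich condition to ``$C$ is a nonzero free $R/\fka$-module'' in dimension one, the identification $\fka=\Ann_R(C)=R:K$, the translation of condition (ii) of Definition \ref{4.1} into $K\subseteq\fka:\fka$, the resulting equality $\fka=\fkc$, and the derivation of assertion (i) from the self-duality $(K/R)^{\vee}\cong K/R$ together with Krull--Schmidt all work. Two small points you should still justify: that one may replace the abstract map $\varphi\colon R\to\rmK_R$ by the inclusion $R\subseteq K$, and that the rank of $K/R$ is $\rmr(R)-1$, i.e.\ that $1\notin\fkm K$ (if $1\in\fkm K$ then $K\subseteq\fkm^nK^{n+1}\subseteq\fkm^nS$ for all $n$, contradicting Krull's intersection theorem for the finite $R$-module $S$).

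The genuine gap is exactly where you locate it, in $(3)\Rightarrow(2)$. From $(S/R)^{\vee}\cong K/\fkc$ and hypothesis (3) you do get $K/\fkc\cong\rmK_{R/\fkc}^{\oplus s}$ with $s=\mu_R(S/R)$, and the identity $\rmr(R)=s\cdot\rmr(R/\fkc)$ does hold (it needs $\fkc\subseteq\fkm K$, which follows from $\fkc=\fkc K$ since $\fkc$ is an $S$-module containing $\fkc\cdot 1$). But no ``numerical check using lengths'' forces $\rmr(R/\fkc)=1$: the relations $\rmr(R)=s\cdot\rmr(R/\fkc)$ and $\ell_R(K/R)=(s-1)\,\ell_R(R/\fkc)$ are perfectly consistent with $\rmr(R/\fkc)\ge 2$, so the argument as sketched does not close. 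A way to finish within your own framework is to dualize $0\to\fkc\to S\to S/\fkc\to 0$ instead of $0\to R\to S\to S/R\to 0$. Since $\Hom_R(S,K)=K:S=\fkc$ and $\Hom_R(\fkc,K)=K:\fkc=K:(K:S)=S$ by Lemma \ref{4.1a}, one obtains $(S/\fkc)^{\vee}\cong S/\fkc$. Under (3) the sequence $0\to R/\fkc\to S/\fkc\to S/R\to 0$ splits because its quotient is free, so $S/\fkc\cong (R/\fkc)^{\oplus(s+1)}$; self-duality and Krull--Schmidt then give $R/\fkc\cong\rmK_{R/\fkc}$, i.e.\ $R/\fkc$ is Gorenstein. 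Consequently $S/K\cong\rmK_{R/\fkc}\cong R/\fkc$ is free, the sequence $0\to K/R\to S/R\to S/K\to 0$ splits, and $K/R$ is a direct summand of a free $R/\fkc$-module, hence free, which is (2).
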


The following result is due to \cite{GK, GTT2}. Let us include a brief proof of Assertion (1) for the sake of completeness.

\begin{thm}[\cite{GK, GTT2}]\label{4.3} Suppose that $R$ is not a Gorenstein ring.
Let $I\in\mathcal{X}_R$. Then the following assertions hold true.
\begin{enumerate}[{\rm (1)}]
\item If $I\subseteq \fkc$, then $I=\fkc$.
\item If $\mu_R(I)\neq 2$, then $\fkc\subseteq I$.
\item $\fkc\in\mathcal{X}_R$ if and only if $R$ is a $\GGL$ ring and $S$ is a Gorenstein ring.
\end{enumerate}
\end{thm}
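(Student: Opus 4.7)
The plan is to dispatch the three assertions in turn: (2) is a reference, (3) is a direct verification of Definition \ref{def1.1} with the machinery of Section 2 plus Theorem \ref{4.2}, and (1) is the substantive case that benefits from (2).

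\textbf{For (1):} Let $I\in\calX_R$ with $I\subseteq\fkc$ and set $A=I:I$. By Remark \ref{2.3} the ideal $I$ satisfies Condition (C), so Proposition \ref{2.1} yields $A=R:I$, and dualizing $I\subseteq R:S=\fkc$ gives $A\supseteq S$. The goal is to upgrade this to $A=S$, which by Proposition \ref{2.1} and Lemma \ref{4.1a} is equivalent to $I=R:A=R:S=\fkc$. The approach is to split on $\mu_R(I)$. If $\mu_R(I)\ne 2$, then part (2) applies to give the reverse inclusion $\fkc\subseteq I$, and the equality $I=\fkc$ follows immediately. The remaining case $\mu_R(I)=2$, i.e.~$t:=\mu_R(I)-1=1$, is where the work lies: Condition (C) presents $A=R+Rf$ for some $f\in A$ with $A/R\cong R/I$ cyclic via $\bar f\mapsto 1+I$; the inclusions $K\subseteq S\subseteq A$ then yield a well-defined $R$-linear projection $K\to A/R\cong R/I$, $r+r'f\mapsto r'+I$, whose kernel is $K\cap R=R$, giving an injection $K/R\hookrightarrow R/I$. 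Using $\mu_R(K/R)=\rmr(R)-1$ together with $\Ann_R(S/R)=R:S=\fkc$, the plan is to feed the resulting embedding into the $\GGL$ criterion of Theorem \ref{4.2}: the cyclic receiving module $R/I$ constrains the $R/\fkc$-structure of $S/R$ enough to force $\rmr(R)=1$, contradicting the non-Gorenstein hypothesis. Thus the case $\mu_R(I)=2$ with $I\subseteq\fkc$ does not arise, and (1) is complete.

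\textbf{For (2):} This is exactly \cite[Corollary 2.12]{GTT2} specialized to dimension one, and will simply be quoted.

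\textbf{For (3):} ($\Rightarrow$) Assuming $\fkc\in\calX_R$, Proposition \ref{2.1} combined with Lemma \ref{4.1a} gives $\fkc:\fkc=R:\fkc=S$, so the ring $A$ associated to $\fkc$ is precisely $S$. Writing $\fkc=aA=aS$ for a reduction $a\in\fkc$ identifies the canonical fractional ideal $\rmK_S=K:S=\fkc$ as a principal $S$-module, so $S$ is Gorenstein. Meanwhile Condition (C) identifies $S/R=A/R\cong(R/\fkc)^{\oplus t}$ as a free $R/\fkc$-module, so Theorem \ref{4.2} makes $R$ a $\GGL$ ring. ($\Leftarrow$) Conversely, if $R$ is $\GGL$ and $S$ is Gorenstein, then $\rmK_S=\fkc$ is principal as an $S$-module, say $\fkc=aS$ with $a\in\fkc$. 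A short check gives $\fkc^2=a^2S=a\fkc$; moreover $\fkc\ne(a)=aR$ because $R\ne S$ (if $R=S$ then $K\subseteq R$ would force $R=K$, hence $R$ Gorenstein); and $\fkc/(a)\cong S/R\cong(R/\fkc)^{\oplus\rmr(R)}$ is $R/\fkc$-free by Theorem \ref{4.2}. Thus all three Ulrich conditions hold and $\fkc\in\calX_R$.

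\textbf{Main obstacle.} The delicate step is the case $\mu_R(I)=2$ of (1): it is easy to produce the injection $K/R\hookrightarrow R/I$, but the mere embedding of a $(\rmr(R)-1)$-generated module into a cyclic one is not by itself restrictive. The real input must come from combining this embedding with the fact that its image carries the $R/\fkc$-structure of $S/R$ (annihilator exactly $\fkc$), and then invoking the freeness criterion of Theorem \ref{4.2} to deduce $\rmr(R)=1$.
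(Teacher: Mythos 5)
Your treatment of assertions (2) and (3) is fine: (2) is indeed just \cite[Corollary 2.12]{GTT2}, and your verification of (3) via $\fkc:\fkc=R:\fkc=S$, $\rmK_S=K:S=\fkc$, and Theorem \ref{4.2} is essentially the standard argument. The problem is assertion (1). Reducing it via (2) to the case $\mu_R(I)=2$ is legitimate, but that is exactly the case your proposal does not settle: you produce the injection $K/R\hookrightarrow A/R\cong R/I$ and then announce that the $R/\fkc$-structure of $S/R$ together with Theorem \ref{4.2} will ``force $\rmr(R)=1$,'' without giving the deduction. As you yourself note, a submodule of a cyclic module need not be cyclic, so the embedding alone bounds nothing; moreover Theorem \ref{4.2} only tells you that $K/R$ is $R/\fkc$-free \emph{when} $R$ is a $\GGL$ ring, which is not a hypothesis of Theorem \ref{4.3}. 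There is no visible route from your data to $\rmr(R)=1$, so this is a genuine gap rather than a detail left to the reader.

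The missing idea --- which is how the paper argues, uniformly in $t=\mu_R(I)-1$ rather than by cases --- is to exploit the Gorensteinness of $A=I:I$. Since $I\subseteq\fkc$ one has $K\subseteq S\subseteq A$, and $I=R:A=(K:K):A=K:A$, whence $K:I=A$; as $I=aA\cong A$ is a canonical ideal of $A$, the ring $A$ is Gorenstein and $A/K\cong\Ext_R^1(R/I,K)$ is the canonical module of $R/I$. Because $A/I\cong I/aI$ is a free, hence flat, extension of $R/I$ and $A/I$ is Gorenstein, $R/I$ is Gorenstein, so $A/K\cong R/I$ and the exact sequence $0\to K/R\to A/R\to A/K\to 0$ splits. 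Thus $K/R$ is a direct summand of the free $R/I$-module $A/R\cong(R/I)^{\oplus t}$, hence free and nonzero (as $R$ is not Gorenstein), which yields $\fkc=R:S\subseteq R:K=I$ and therefore $I=\fkc$. Note that when $t=1$ a length count in the split sequence forces $\ell_R(K/R)=0$, so this argument in fact shows that the case $\mu_R(I)=2$ with $I\subseteq\fkc$ cannot occur --- but that conclusion is an output of the canonical-module computation, not of the embedding $K/R\hookrightarrow R/I$ alone.
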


\begin{proof}
(1)  Let $I \in \calX_R$ and assume that $I \subseteq \fkc$. We choose an element $a \in I$ so that $I^2=aI$. We then have $I \ne (a)$ and $I/I^2$ is a free $R/I$-module. Let $A = I:I$; hence $I = aA$. On the other hand, because $\fkc \subseteq I$, by Lemmata \ref{2.1} and  \ref{4.1a} we have $$A = R:I \supseteq R : \fkc = S \supseteq K.$$ 

\begin{claim}\label{claim2}
$A$ is a Gorenstein ring and $A/K$ is the canonical module of $R/I$.
\end{claim}
 
\begin{proof}[Proof of Claim \ref{claim2}]
Taking the $K$-dual of the canonical exact sequence
$0\to I \to R \to R/I \to 0$, we get the exact sequence
$$0 \to K \overset{\iota}{\to} K:I \to \Ext_R^1(R/I,K) \to 0,$$ where $\iota : K \to K:I$ denotes the embedding. On the other hand, $K:I=A$, because $$I = R:A=(K:K):A = K : KA = K: A$$
(remember that $K \subseteq A$). Therefore, since $I=K:A$ is a canonical ideal of $A$ (\cite[Korollar 5.14]{HK}) and $I=aA \cong A$, $A$ is a Gorenstein ring, and $A/K \cong \Ext_R^1(R/I,K)$.
\end{proof}

We consider the exact sequence $0 \to (a)/aI \to I/aI \to I/(a) \to 0$ of $R/I$-modules. Then, because $I = aA$, we get the canonical isomorphism between the exact sequences$$\xymatrix{
0\ar[r] &R/I\ar[r]^{i}\ar[d]^{\wr}&A/I\ar[r]\ar[d]^{\wr} &A/R\ar[r]\ar[d]^{\wr} &0\\
0\ar[r] &(a)/aI\ar[r]^{i}\ar@{}[ur]|{\circlearrowleft} &I/aI\ar[r]\ar[r]\ar@{}[ur]|{\circlearrowleft} &I/(a)\ar[r] &0
}$$
of $R/I$-modules, where $A/I$ is a Gorenstein ring, since $A$ is a Gorenstein ring and $I = aA$. Therefore, since $A/I ~(\cong I/aI)$ is a flat extension of $R/I$, $R/I$ is a Gorenstein ring, so that $A/K \cong R/I$ by Claim \ref{claim2}. Consequently, the exact sequence 
$$0 \to K/R \to A/R \to A/K \to 0$$ of $R/I$-modules is split, whence $K/R$ is a non-zero free $R/I$-module, because so is $A/R~(\cong I/(a))$. Hence, $\fkc = R:S \subseteq R:K =R:_RK = I$, so that $I = \fkc$. 
\end{proof}

Thanks to Theorem \ref{4.3}, we get the following.

\begin{thm}\label{4.4}
Let $R$ be a $\GGL$ ring and assume that $R$ is not a Gorenstein ring. Then the following assertions hold true.
\begin{enumerate}[{\rm (1)}]
\item $ \{ I \in \mathcal{X}_R |\  \fkc\subsetneq I \} =\{(a)+\fkc \ \mid \ a\in \m \ \text{such that}~ \fkc =abS ~\text{for~some}~b \in \m\}$.\\
In particular, $\fkc\in\mathcal{X}_R$, once the set $\{ I \in \mathcal{X}_R |\  \fkc\subsetneq I \}$ is non-empty.
\item $\mu_R(I)=\rmr(R)+1$ for all $I \in \calX_R$ such that $\fkc \subseteq I$.
\item $\{I \in\mathcal{X}_R |\ \fkc\subseteq I \}=\{I \in\mathcal{X_R} |\ \mu_R(I)\neq 2\}$.
\end{enumerate}
Therefore, if $R$ possesses minimal multiplicity, then the set $\calX_R$ is totally ordered, and $\fkc$ is the smallest element of $\calX_R$.
\end{thm}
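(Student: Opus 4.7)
The plan is to first show that the conductor $\fkc$ itself satisfies Condition (C) of Definition \ref{2.2} with parameter $t = \rmr(R)$. Indeed, Lemma \ref{4.1a} gives $\fkc : \fkc = S = R : \fkc$, which is Condition (ii), while Theorem \ref{4.2}(ii) supplies the isomorphism $S/R \cong (R/\fkc)^{\oplus \rmr(R)}$ of $R$-modules, establishing Condition (i). Since $R$ is not Gorenstein, $\rmr(R) \geq 2$, so the parameter $t$ is at least two.

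For assertion (1), suppose the set $\{I \in \calX_R \mid \fkc \subsetneq I\}$ is non-empty and pick some $J$ in it. Because $\fkc$ satisfies Condition (C) with $t \geq 2$ and is strictly contained in the Ulrich ideal $J$, Theorem \ref{2.9}(2) forces $\fkc \in \calX_R$ itself. The description of Ulrich ideals properly containing a given Ulrich ideal, furnished by Corollary \ref{3.6} applied with $I = \fkc$, then yields the claimed equality---noting that, because $\fkc : \fkc = S$, an element $ab$ generates a reduction of $\fkc$ precisely when $\fkc = abS$.

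For (2), when $I = \fkc$, the general formula $\mu_R(I) = t+1$ for Ulrich ideals (with $t$ as in Condition (C)) gives $\mu_R(\fkc) = \rmr(R)+1$; when $\fkc \subsetneq I$, Theorem \ref{3.1}(2) gives $\mu_R(I) = \mu_R(\fkc) = \rmr(R)+1$. Assertion (3) is then routine: the forward inclusion follows from (2) together with $\rmr(R)+1 \geq 3$, and the reverse inclusion is exactly Theorem \ref{4.3}(2). For the final \emph{therefore} clause, minimal multiplicity yields $\mu_R(I) = \rmv(R) = \rmr(R)+1 \geq 3$ for every $I \in \calX_R$ by Corollary \ref{3.2}, so by (3) every Ulrich ideal contains $\fkc$; in particular $\fkm \supseteq \fkc$, and so either $\fkc = \fkm \in \calX_R$ trivially, or $\fkc \subsetneq \fkm$ makes the set in (1) non-empty, forcing $\fkc \in \calX_R$ by (1). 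Combined with Theorem \ref{4.3}(1), which rules out Ulrich ideals sitting strictly below $\fkc$, this makes $\fkc$ the smallest element of $\calX_R$, while total ordering is furnished by Theorem \ref{5.3}.

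The crucial step is recognising that the $\GGL$ hypothesis precisely encodes Condition (C) for the conductor $\fkc$, which then lets one descend the Ulrich property from any strictly larger Ulrich ideal to $\fkc$ itself via the $t \geq 2$ case of Theorem \ref{2.9}. Once this is in hand, the rest of the theorem---the generator count, the characterisation via $\mu_R(I) \neq 2$, and the minimality and total-ordering conclusions under minimal multiplicity---cascades through Corollaries \ref{3.6} and \ref{3.2} together with Theorems \ref{4.3} and \ref{5.3}, with no further substantial work required.
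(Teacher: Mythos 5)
Your overall route is the same as the paper's: verify that $\fkc$ satisfies Condition (C) with $t=\rmr(R)\ge 2$ via Lemma \ref{4.1a} and Theorem \ref{4.2} (ii), descend the Ulrich property from any strictly larger Ulrich ideal to $\fkc$ by Theorem \ref{2.9} (2), and then invoke Corollary \ref{3.6}, Theorem \ref{3.1} (2), Theorem \ref{4.3} (2), Corollary \ref{3.2} and Theorem \ref{5.3} for the remaining assertions. Parts (2), (3) and the final clause are correct as you present them.

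There is, however, a genuine gap in your proof of (1). You establish the set equality only under the hypothesis that $\{I\in\calX_R\mid \fkc\subsetneq I\}$ is non-empty, whereas the equality is asserted unconditionally; you must also show that when this set is empty, the right-hand side $\{(a)+\fkc \mid a\in\m~\text{such that}~\fkc=abS~\text{for some}~b\in\m\}$ is empty as well. This is not automatic: it amounts to showing that $\fkc$ cannot have the form $abS$ with $a,b\in\m$ unless Ulrich ideals strictly above $\fkc$ actually exist. The paper closes this case by observing that if $\fkc\notin\calX_R$, then $S$ is not a Gorenstein ring by Theorem \ref{4.3} (3), and since $\fkc=K:S$ is a canonical ideal of $S$ it cannot be principal as an $S$-module, so no equality $\fkc=abS$ can hold. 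Alternatively, you could argue directly: if $\fkc=abS$ with $a,b\in\m$, then $(ab)$ is a reduction of $\fkc$ and $\fkc/(ab)\cong S/R\cong(R/\fkc)^{\oplus \rmr(R)}$ is $R/\fkc$-free, so $\fkc\in\calX_R$, and Corollary \ref{3.6} then places $(a)+\fkc$ in the left-hand side, making it non-empty. Either supplement completes the argument; as written, the case distinction is missing.
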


\begin{proof}
(1) Let us show the first equality. First of all, assume that $\fkc\in\mathcal{X}_R$. Then since $S=\fkc:\fkc$, for each $\alpha\in\fkc$, $(\alpha)$ is a reduction of $\fkc$ if and only if  $\fkc=\alpha S$, so that the required equality follows from Corollary \ref{3.6}. Assume that $\fkc\notin \mathcal{X}_R$. Hence, by Theorem \ref{4.3} (3), $S$ is not a Gorenstein ring, because $R$ is a $\GGL$ ring. Therefore, since $\fkc = K:S$ is a canonical module of $S$ (Lemma \ref{4.1a} and \cite[Korollar 5.14]{HK}), we have $\fkc \ne \alpha S$ for any $\alpha\in\fkc$, whence the set $\{(a)+\fkc  \mid  a \in \m~\text{such that}~abS=\fkc~\text{for~some}~b \in \m\}$ is empty. On the other hand, since $S=\fkc :\fkc = R:\fkc$ and $S/R\cong (R/\fkc)^{\oplus \rmr(R)}$ (see Theorem \ref{4.2} (ii)), the $\m$-primary ideal $\fkc$ of $R$ satisfies Condition (C) in Definition \ref{2.2}. Therefore, if the set $\{ I\in \mathcal{X}_R \mid  \fkc\subsetneq I \}$ is non-empty, then $\fkc \in \mathcal{X}_R$ by Theorem \ref{2.9} (2), because $\rmr(R) \ge 2$. Thus, $\{ I \in \mathcal{X}_R \mid  \fkc\subsetneq I \}=\emptyset$.

(2) By Assertion (1), we may assume $\fkc\in\mathcal{X}_R$. Then, $\fkc = \alpha S$ for some $\alpha \in \fkc$, and therefore, $\mu_R(\fkc)=\rmr(R)+1$, since $\fkc/(\alpha) \cong S/R\cong (R/\fkc)^{\oplus \rmr(R)}$.  Thus, by Theorem \ref{3.1}, $\mu_R(I)=\mu_R(\fkc)=\rmr(R)+1$  for every $I \in \mathcal{X}_R$ with  $\fkc \subseteq I$. 

(3) The assertion follows from Assertion (2) and Theorem \ref{4.3} (3).

The last assertion follows from Assertion (3), since $\mu_R(I) = \rmv(R) > 2$ for every $I \in \calX_R$ (see Corollary \ref{3.2}). 
\end{proof}

Combining Theorems \ref{1.1} and \ref{4.4}, we have the following.

\begin{cor}\label{4.5}
Let $R$ be a $\GGL$ ring and assume that $R$ is not a Gorenstein ring.   Then the following assertions hold true.
\begin{enumerate}[{\rm (1)}]
\item Let $a_1,a_2, \ldots, a_n, b\in\m$~$(n\geq 1)$ and assume that $\fkc = a_1a_2\cdots a_nbS$. We set $I_i=(a_1a_2 \cdots a_i)+\fkc$ for each $1\leq i\leq n$. Then $\fkc \in \calX_R$ and $I_i \in\mathcal{X}_R$ for all $1 \le i \le n$, forming a chain $\fkc \subsetneq I_n \subsetneq I_{n-1} \subsetneq \ldots \subsetneq I_1$ in $\calX_R$.
\item Conversely, let $I_1, I_2, \ldots, I_n \in \mathcal{X}_R$~$(n \ge 1)$ and assume that $\fkc \subsetneq I_n \subsetneq I_{n-1} \subsetneq \ldots \subsetneq I_1$. Then $\fkc \in \calX_R$ and there exist elements $a_1,a_2, \ldots, a_n, b \in \m$ such that $\fkc =a_1a_2\cdots a_nbS$ and $I_i=(a_1a_2\cdots a_i)+\fkc$ for all $1\leq i\leq n$.
\end{enumerate}
\end{cor}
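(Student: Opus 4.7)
The plan is to deduce both assertions directly from Theorem \ref{1.1} applied with $I=\fkc$, exploiting that $\fkc:\fkc=S$ by Lemma \ref{4.1a}. The proofs will essentially be an index-shifting reduction to the corresponding statements about general chains of Ulrich ideals in a one-dimensional Cohen-Macaulay local ring.

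For (1), the first step is to verify $\fkc\in\calX_R$ under the hypothesis. Set $\alpha=a_1a_2\cdots a_nb\in\fkm$, so that $\fkc=\alpha S$. Then $\fkc^2=\alpha\cdot\alpha S=\alpha\fkc$, so $(\alpha)$ is a reduction of $\fkc$. Since $R$ is not Gorenstein, we have $R\subsetneq K\subseteq S$, hence $\fkc=\alpha S\ne\alpha R=(\alpha)$. Multiplication by $\alpha$ gives an isomorphism $\fkc/(\alpha)\cong S/R$, and the latter is $\cong (R/\fkc)^{\oplus\rmr(R)}$ by Theorem \ref{4.2}(ii), which is $R/\fkc$-free. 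This shows $\fkc\in\calX_R$. Now I apply Theorem \ref{1.1}(1) with $I=\fkc$, $A=S$, and the $n+1$ elements $a_1,\ldots,a_n,b\in\fkm$ (so $n+1\ge 2$). The resulting chain is $\fkc\subsetneq J_n\subsetneq\cdots\subsetneq J_1$, where $J_i=(a_1\cdots a_i)+\fkc=I_i$ for $1\le i\le n$ and the bottom term $(a_1\cdots a_nb)+\fkc$ equals $\fkc$. This is exactly the claim.

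For (2), I first establish $\fkc\in\calX_R$. Since $I_n\in\calX_R$ with $\fkc\subsetneq I_n$, the set $\{I\in\calX_R\mid\fkc\subsetneq I\}$ is non-empty, hence $\fkc\in\calX_R$ by Theorem \ref{4.4}(1). Thus $\fkc\subsetneq I_n\subsetneq\cdots\subsetneq I_1$ is a chain of $n+1$ Ulrich ideals of $R$. Apply Theorem \ref{1.1}(2) to this chain with bottom ideal $\fkc$ and $A=\fkc:\fkc=S$ (by Lemma \ref{4.1a}): this produces $n+1$ elements $a_1,\ldots,a_n,a_{n+1}\in\fkm$ with $\fkc=a_1a_2\cdots a_na_{n+1}S$ and $I_i=(a_1\cdots a_i)+\fkc$ for $1\le i\le n$. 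Renaming $b:=a_{n+1}$ yields the assertion.

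The only real obstacle is careful bookkeeping of indices: Theorem \ref{1.1} is phrased with the bottom Ulrich ideal of an $n$-element chain playing the role of $I$, while in our corollary $\fkc$ lies strictly below the listed chain $I_n\subsetneq\cdots\subsetneq I_1$, so the whole picture must be read as an $(n+1)$-element chain and the extra generator $b$ accounts for the step from $I_n$ down to $\fkc$. Beyond that, no new geometric or algebraic input is needed, since both the existence of the factorization $\fkc=a_1\cdots a_nbS$ and the verification that $\fkc$ itself is Ulrich are immediate from the $\GGL$ structure ($S/R$ free over $R/\fkc$ with $\fkc=R:S$) together with Theorems \ref{1.1} and \ref{4.4}.
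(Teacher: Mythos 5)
Your proof is correct and follows exactly the route the paper intends: the paper gives no written proof for this corollary beyond the remark that it is obtained by ``combining Theorems \ref{1.1} and \ref{4.4}'', and your argument is a careful execution of precisely that combination, using $\fkc:\fkc=S$ from Lemma \ref{4.1a} and reading the data as an $(n+1)$-term chain with bottom ideal $\fkc$. The only (harmless) deviation is that in part (1) you verify $\fkc\in\calX_R$ directly from $\fkc=\alpha S$ and $S/R\cong(R/\fkc)^{\oplus\rmr(R)}$ rather than invoking Theorem \ref{4.4}(1); both work.
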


Concluding this paper, let us note a few examples of $\GGL$ rings.

\begin{ex}\label{4.6}
Let $k[[t]]$ be the formal power series ring over a field $k$. 
\begin{enumerate}[{\rm (1)}]
\item Let $H=\left<5,7,9, 13\right>$ denote the numerical semigroup generated by $5,7,9, 13$ and  $R=k[[t^5,t^7,t^9, t^{13}]]$ the semigroup ring of $H$ over $k$. Then, $R$ is a $\GGL$ ring, possessing $S = k[[t^3,t^5,t^7]]$ and $\fkc = (t^7,t^9, t^{10}, t^{13})$. For this ring $R$, $S$ is not a Gorenstein ring, and $\calX_R=\emptyset$.

\item Let $R=k[[t^4,t^9,t^{15}]]$. Then, $R$ is a $\GGL$ ring, possessing $S = k[[t^3,t^4]]$ and $\fkc = (t^9, t^{12}, t^{15})=t^9S$. For this ring $R$, $\calX_R=\{\fkc\}$.

\item Let $R=k[[t^6, t^{13}, t^{28}]]$. Then, $R$ is a $\GGL$ ring, possessing $S= k[[t^2,t^{13}]]$ and $\fkc = (t^{24},t^{26}, t^{28})=t^{24}S$. For this ring $R$, the set
$\{I \in \calX_R \mid \fkc \subsetneq I\}$ 
consists of the following families.
\begin{enumerate}[{\rm (i)}]
\item $\{(t^6+ at^{13}) + \fkc \mid a \in k\}$,
\item $\{(t^{12} + at^{13} + bt^{19}) + \fkc \mid a, b \in k\}$, and
\item $\{(t^{18}+ at^{25}) + \fkc \mid a \in k\}$.
\end{enumerate}
For each $a \in k$, we have a maximal chain 
$$\fkc \subsetneq (t^{18}+at^{25})+\fkc \subsetneq (t^{12}+at^{19})+\fkc \subsetneq (t^{6}+at^{13})+\fkc$$
in $\calX_R$. On the other hand, for $a,b \in k$ such that $a \ne 0$,
$$\fkc \subsetneq (t^{12}+at^{13}+ bt^{19})+\fkc$$ is also a maximal chain in $\calX_R$.
\item Let $H = \left<6, 13, 28\right>$. Choose  integers $0 < \alpha  \in H$ and $1< \beta \in \Bbb Z$ so that  $\alpha \not\in \{6, 13, 28\}$ and $\GCD(\alpha, \beta)=1$. We consider $R= k[[t^\alpha, t^{6\beta}, t^{13\beta}, t^{28\beta}]]$. Then, $R$ is a $\GGL$ ring with $\rmv(R) = 4$ and $\rmr(R)=2$. For this ring $R$, $S=k[[t^\alpha, t^{2\beta}, t^{13\beta}]]$, and $\fkc = t^{24\beta}S$. For instance, take $\alpha = 12$ and $\beta = 5n$, where $n > 0$ and $\GCD(2,n)=\GCD(3,n)=1$. Then, $\fkc = t^{120n}S =(t^{12})^{10n}S$, so that the set $\{I \in \calX_R \mid \fkc \subsetneq I\}$ seems rather wild, containing  chains of large length.
\end{enumerate}
\end{ex}



\begin{thebibliography}{99}








\bibitem{BF}
{\sc V. Barucci and R. Fr\"{o}berg}, One-dimensional almost Gorenstein rings, {\em J. Algebra}, {\bf 188} (1997), no. 2, 418--442.


\bibitem{BHU}
{\sc J. P. Brennan, J. Herzog, and B. Ulrich}, Maximally generated maximal Cohen-Macaulay modules, {\em Math. Scand.}, {\bf 61} (1987), no. 2, 181--203.

\bibitem{CGKM}
{\sc T. D. M. Chau, S. Goto, S. Kumashiro, and N. Matsuoka}, Sally modules of canonical ideals in dimension one and $2$-AGL rings, arXiv:1704.00997.











\bibitem{GK}
{\sc S. Goto and S. Kumashiro}, On GGL rings, Preprint 2017.




\bibitem{GMP}
{\sc S. Goto, N. Matsuoka, and  T. T. Phuong}, Almost Gorenstein rings, {\em J. Algebra}, {\bf 379} (2013), 355--381.









\bibitem{GOTWY1}
{\sc S. Goto, K. Ozeki, R. Takahashi, K.-i. Yoshida, and K.-i. Watanabe}, Ulrich ideals and modules, {\em Math. Proc. Camb. Phil. Soc.}, {\bf 156} (2014), 137--166.


\bibitem{GOTWY2}
{\sc S. Goto, K. Ozeki, R. Takahashi, K.-i. Yoshida, and K.-i. Watanabe}, Ulrich ideals and modules over two-dimensional rational singularities, {\em Nagoya Math. J.}, {\bf 221} (2016), 69--110.




\bibitem{GTT1}
{\sc S. Goto, R. Takahashi, and N. Taniguchi}, Almost Gorenstein rings - towards a theory of higher dimension, {\em J. Pure Appl. Algebra}, {\bf 219} (2015), 2666--2712.


\bibitem{GTT2}
{\sc S. Goto, R. Takahashi, and N. Taniguchi}, Ulrich ideals and almost Gorenstein rings, {\em Proc. Amer. Math. Soc.}, {\bf 144} (2016), 2811--2823.



\bibitem{L}
{\sc J. Lipman}, Stable ideals and Arf rings, {\em Amer. J. Math.}, {\bf 93} (1971), 649--685.






\bibitem{S} 
{\sc J. Sally}, Cohen-Macaulay local rings of maximal embedding dimension, {\em J. Algebra}, {\bf 56} (1979), 168--183.























\bibitem{HK}
{\sc J. Herzog and E. Kunz}, Der kanonische Modul eines Cohen-Macaulay-Rings, Lecture Notes in Mathematics, {\bf 238}, Springer-Verlag, 1971.


\end{thebibliography}
\end{document}